\newtheorem{thm}{Theorem}[section]
\newtheorem{lem}[thm]{Lemma}
\newtheorem{prop}[thm]{Proposition}
\theoremstyle{definition}
\newtheorem{ex}[thm]{Example}
\theoremstyle{remark}
\DeclareMathOperator{\Ker}{Ker}
\DeclareMathOperator{\Imm}{Im}
\newcommand{\Q}{\mathbb{Q}}
\newcommand{\Z}{\mathbb{Z}}
\newcommand{\I}{\mathbb{I}}
\newcommand{\calH}{\mathcal{H}}
\newcommand{\calT}{\mathcal{T}}
\newcommand{\calF}{\mathcal{F}}
\newcommand{\fH}{\mathfrak{H}}
\newcommand{\di}{\diamond}
\begin{document}

\title{On a family of relations of rooted tree maps}

\author{Hideki Murahara}
\address[Hideki Murahara]{The University of Kitakyushu, \endgraf 
4-2-1 Kitagata, Kokuraminami-ku, Kitakyushu, Fukuoka, 802-8577, Japan}
\email{hmurahara@mathformula.page}

\author{Tatsushi Tanaka}
\address[Tatsushi Tanaka]{Department of Mathematics, Faculty of Science, Kyoto Sangyo University, \endgraf 
Motoyama, Kamigamo, Kita-ku, Kyoto-city 603-8555, Japan}
\email{t.tanaka@cc.kyoto-su.ac.jp}

\author{Noriko Wakabayashi}
\address[Noriko Wakabayashi]{Center of Physics and Mathematics, Institute for Liberal Arts and Sciences, Osaka Electro-Communication University  \endgraf 
18-8, Hatsutyou, Neyagawa-city, Osaka 572-8530, Japan}
\email{wakabayashi@osakac.ac.jp}

\subjclass[2020]{05C05, 05E16, 16T05}
\keywords{Hopf algebra of rooted trees, 
rooted tree maps, 
words, 
harmonic products}

\begin{abstract}
This paper is devoted to proving an infinite sequence of relations for rooted tree maps. 
On the way, we also give a basis for the space of rooted tree maps. 
\end{abstract}

\maketitle

\section{Introduction}
In \cite{B, D, CK}, a Hopf algebra of rooted trees $\calH$ is introduced, and in \cite{T}, it is shown that $\calH$ acts on the noncommutative polynomial ring $\fH = \Q\langle x, y \rangle$.
This action gives rise to linear maps on $\fH$, which are called rooted tree maps (RTMs). 
It is shown in \cite{T} that RTMs state a class of linear relations for multiple zeta values. 
Some other results on RTMs are found in \cite{BT1, BT2, MT, MTW, TW}. 

This paper aims to discuss the structure of the space of RTMs $\widetilde{\calH}$. 
The main results are summarized in \S\ref{sec3}. We show in \S\ref{sec4} that elements in $\widetilde{\calH}$ which come from rooted forests made only by applying the grafting operator $B_+$ or by multiplying the tree $\begin{tikzpicture}[scale=0.25,baseline={([yshift=-.5ex]current bounding box.center)}]
\def\cz{5}
\def\wi{0.5}

\newcommand{\ci}[1]{	
	\fill[black] (#1) circle (\cz pt);
	\draw (#1) circle (\cz pt);
}

\coordinate (R) at (0,0);

\ci{R}

\end{tikzpicture}
Thus we find the space $\widetilde{\calH}$ is much smaller and there are a lot of relations among RTMs. 
In \S\ref{sec5}, we give a proof of a family of relations for RTMs by interpreting it as an identity on $\fH$ containing the product $\di$, 
an analogue of the usual harmonic product.

\section{Preliminaries}\label{sec2}
\subsection{Hopf algebra of rooted trees : an overview}
A rooted tree $t$ is a connected, finite, oriented, and simple graph 
such that there is precisely one distinguished vertex, called the root of $t$, which has no incoming edges. 
We here consider only non-planar rooted trees, that is, branches of $t$ are not ordered. 
The number of vertices of $t$ is called its degree and denoted by $\deg(t)$. 
Rooted trees of degree $\leq 4$ are visualized as follows. 
$$\begin{tikzpicture}[scale=0.25,baseline={([yshift=-.5ex]current bounding box.center)}]
\def\cz{5}
\def\wi{0.5}

\newcommand{\ci}[1]{	
	\fill[black] (#1) circle (\cz pt);
	\draw (#1) circle (\cz pt);
}

\coordinate (R) at (0,0);

\ci{R}

\end{tikzpicture}
\begin{tikzpicture}[scale=0.25,baseline={([yshift=-.5ex]current bounding box.center)}]
\def\cz{5}
\def\wi{0.5}

\newcommand{\ci}[1]{	
	\fill[black] (#1) circle (\cz pt);
	\draw (#1) circle (\cz pt);
}

\coordinate (R) at (0,0);

\coordinate (r1) at (0,-1);

\draw (R) to (r1);

\ci{R}
\ci{r1}
\end{tikzpicture}\ , \ \ \ \ 
\begin{tikzpicture}[scale=0.25,baseline={([yshift=-.5ex]current bounding box.center)}]
\def\cz{5}
\def\wi{0.5}

\newcommand{\ci}[1]{	
	\fill[black] (#1) circle (\cz pt);
	\draw (#1) circle (\cz pt);
}

\coordinate (R) at (0,0);

\coordinate (r1) at (0,-1);
\coordinate (r2) at (0,-2);

\draw (R) to (r1);
\draw (r1) to (r2);

\ci{R}
\ci{r1}
\ci{r2}
\end{tikzpicture}\ , \ \ \ \ \begin{tikzpicture}[scale=0.25,baseline={([yshift=-.5ex]current bounding box.center)}]
\def\cz{5}
\def\wi{0.5}

\newcommand{\ci}[1]{	
	\fill[black] (#1) circle (\cz pt);
	\draw (#1) circle (\cz pt);
}

\coordinate (R) at (0,0);

\coordinate (r1) at (\wi,-1);
\coordinate (l1) at (-\wi,-1);

\draw (R) to (r1);
\draw (R) to (l1);

\ci{R}
\ci{r1}
\ci{l1}
\end{tikzpicture}\ ,\ \ \ \ 
\begin{tikzpicture}[scale=0.25,baseline={([yshift=-.5ex]current bounding box.center)}]
\def\cz{5}
\def\wi{0.5}

\newcommand{\ci}[1]{	
	\fill[black] (#1) circle (\cz pt);
	\draw (#1) circle (\cz pt);
}

\coordinate (R) at (0,0);

\coordinate (r1) at (0,-1);
\coordinate (r2) at (0,-2);
\coordinate (r3) at (0,-3);

\draw (R) to (r1);
\draw (r1) to (r2);
\draw (r2) to (r3);

\ci{R}
\ci{r1}
\ci{r2}
\ci{r3}
\end{tikzpicture}\ , \ \ \ \ \begin{tikzpicture}[scale=0.25,baseline={([yshift=-.5ex]current bounding box.center)}]
\def\cz{5}
\def\wi{0.5}

\newcommand{\ci}[1]{	
	\fill[black] (#1) circle (\cz pt);
	\draw (#1) circle (\cz pt);
}

\coordinate (R) at (0,0);

\coordinate (r1) at (0,-1);

\coordinate (r1r) at (-\wi,-2);
\coordinate (r1l) at (\wi,-2);

\draw (R) to (r1);
\draw (r1) to (r1r);
\draw (r1) to (r1l);

\ci{R}
\ci{r1}
\ci{r1r}
\ci{r1l}
\end{tikzpicture}\ , \ \ \ \ \begin{tikzpicture}[scale=0.25,baseline={([yshift=-.5ex]current bounding box.center)}]
\def\cz{5}
\def\wi{0.5}

\newcommand{\ci}[1]{	
	\fill[black] (#1) circle (\cz pt);
	\draw (#1) circle (\cz pt);
}

\coordinate (R) at (0,0);

\coordinate (l1) at (-\wi,-1);
\coordinate (r1) at (\wi,-1);

\coordinate (r2) at (\wi,-2);

\draw (R) to (r1);
\draw (R) to (l1);
\draw (r1) to (r2);

\ci{R}
\ci{r1}
\ci{l1}
\ci{r2}
\end{tikzpicture}=\begin{tikzpicture}[scale=0.25,baseline={([yshift=-.5ex]current bounding box.center)}]
\def\cz{5}
\def\wi{0.5}

\newcommand{\ci}[1]{	
	\fill[black] (#1) circle (\cz pt);
	\draw (#1) circle (\cz pt);
}

\coordinate (R) at (0,0);

\coordinate (l1) at (-\wi,-1);
\coordinate (r1) at (\wi,-1);

\coordinate (l2) at (-\wi,-2);

\draw (R) to (r1);
\draw (R) to (l1);
\draw (l1) to (l2);

\ci{R}
\ci{r1}
\ci{l1}
\ci{l2}
\end{tikzpicture}\ , \ \ \ \ \begin{tikzpicture}[scale=0.25,baseline={([yshift=-.5ex]current bounding box.center)}]
\def\cz{5}
\def\wi{0.5}

\newcommand{\ci}[1]{	
	\fill[black] (#1) circle (\cz pt);
	\draw (#1) circle (\cz pt);
}

\coordinate (R) at (0,0);

\coordinate (r1) at (0,-1);
\coordinate (r1r) at (-\wi-\wi,-1);
\coordinate (r1l) at (\wi+\wi,-1);

\draw (R) to (r1);
\draw (R) to (r1r);
\draw (R) to (r1l);

\ci{R}
\ci{r1}
\ci{r1r}
\ci{r1l}
\end{tikzpicture}\ .$$  
The topmost vertex of each rooted tree stands for its root. 
Let $\mathcal{T}$ be the set of (isomorphism class of) rooted trees and, 
for $n \geq 1$, put $\mathcal{T}(n)\coloneqq \{t \in \mathcal{T} | \deg(t) =n \}$. 
It is known that the number of elements of $\mathcal{T}(n)$ is given as follows. 
\begin{align*}
\begin{tabular}{c|ccccccccccc}
$n$ & $1$ & $2$ & $3$ & $4$ & $5$ & $6$ & $7$ & $8$ & $9$ & $10$ & $\cdots$ \\
\hline
$\#\mathcal{T}(n)$ & $1$ & $1$ & $2$ & $4$ & $9$ & $20$ & $48$ & $115$ & $286$ & $719$ & $\cdots$
\end{tabular}
\end{align*}
A rooted forest is a product of rooted trees. 
The product is defined by disjoint union, which is commutative. 
The unit is denoted by $\I$ and called the empty forest. 
Let $\calH$ be the free, associative, commutative, and unitary $\Q$-algebra generated by $\mathcal{T}$. 
As a vector space, we find $\displaystyle\calH=\sum \Q \cdot f$, where the sum runs over all rooted forests $f$. 
For example, we have $3\, \begin{tikzpicture}[scale=0.25,baseline={([yshift=-.5ex]current bounding box.center)}]
\def\cz{5}
\def\wi{0.5}

\newcommand{\ci}[1]{	
	\fill[black] (#1) circle (\cz pt);
	\draw (#1) circle (\cz pt);
}

\coordinate (R) at (0,0);

\coordinate (r1) at (0,-1);
\coordinate (r1r) at (-\wi-\wi,-1);
\coordinate (r1l) at (\wi+\wi,-1);

\draw (R) to (r1);
\draw (R) to (r1r);
\draw (R) to (r1l);

\ci{R}
\ci{r1}
\ci{r1r}
\ci{r1l}
\end{tikzpicture} + 8\, \begin{tikzpicture}[scale=0.25,baseline={([yshift=-.5ex]current bounding box.center)}]
\def\cz{5}
\def\wi{0.5}

\newcommand{\ci}[1]{	
	\fill[black] (#1) circle (\cz pt);
	\draw (#1) circle (\cz pt);
}

\coordinate (R) at (0,0);

\coordinate (r1) at (0,-1);

\coordinate (r1r) at (-\wi,-2);
\coordinate (r1l) at (\wi,-2);

\draw (R) to (r1);
\draw (r1) to (r1r);
\draw (r1) to (r1l);

\ci{R}
\ci{r1}
\ci{r1r}
\ci{r1l}
\end{tikzpicture}\  \, \in \calH$. 

Similar to the case of rooted trees, 
let $\calF$ be the set of (isomorphism class of) rooted forests. 
For $f \in \calF$, the number of vertices of $f$ is called its degree and denoted by $\deg(f)$. 
For $n \geq 0$, put 
$\calF\coloneqq \{f \in \calF | \deg(f)=n \}$. 
Then we find $\# \calF(n) = \# \calT(n+1)$ for $n \geq 0$ since one obtains an element in $\calF(n)$ by getting rid of the root of $t \in \calT(n)$, 
and conversely one obtains an element in $\calT(n+1)$ by grafting every root of connected components of $f \in \calF(n)$ to a common new root. 

Denote by $\langle \calT \rangle_{\Q}$ the $\Q$-vector space generated by $\calT$. 
The grafting operator is usually denoted by $B_+$, which is the $\Q$-linear map from $\calH$ to $\langle \calT \rangle_{\Q}$ defined by $B_+(\I)=\begin{tikzpicture}[scale=0.25,baseline={([yshift=-.5ex]current bounding box.center)}]
\def\cz{5}
\def\wi{0.5}

\newcommand{\ci}[1]{	
	\fill[black] (#1) circle (\cz pt);
	\draw (#1) circle (\cz pt);
}

\coordinate (R) at (0,0);

\ci{R}

\end{tikzpicture}
for $f=t_1 \cdots t_n \in \calF$ with $t_1, \ldots, t_n \in \calT$, 
$B_+(f) = \begin{tikzpicture}[scale=0.25,baseline={([yshift=-.5ex]current bounding box.center)}]
\def\cz{5}
\def\wi{1.5}

\newcommand{\ci}[1]{	
	\fill[black] (#1) circle (\cz pt);
	\draw (#1) circle (\cz pt);
}

\coordinate (R) at (0,0);

\node (r1) at (0,-1) [below] {$\cdots$} ;
\node (r1r) at (-\wi-\wi,-1) [below] {$t_1$} ;
\node (r1l) at (\wi+\wi,-1) [below] {$t_n$} ;

\draw (R) to (r1r);
\draw (R) to (r1l);

\ci{R}
\end{tikzpicture}$, which represents a single rooted tree obtained by connecting each root of $t_j$'s to a new root by an edge. 
For example, we have 
$B_+\left(3\, \begin{tikzpicture}[scale=0.25,baseline={([yshift=-.5ex]current bounding box.center)}]
\def\cz{5}
\def\wi{0.5}

\newcommand{\ci}[1]{	
	\fill[black] (#1) circle (\cz pt);
	\draw (#1) circle (\cz pt);
}

\coordinate (R) at (0,0);

\coordinate (r1) at (0,-1);
\coordinate (r1r) at (-\wi-\wi,-1);
\coordinate (r1l) at (\wi+\wi,-1);

\draw (R) to (r1);
\draw (R) to (r1r);
\draw (R) to (r1l);

\ci{R}
\ci{r1}
\ci{r1r}
\ci{r1l}
\end{tikzpicture} + 8\, \begin{tikzpicture}[scale=0.25,baseline={([yshift=-.5ex]current bounding box.center)}]
\def\cz{5}
\def\wi{0.5}

\newcommand{\ci}[1]{	
	\fill[black] (#1) circle (\cz pt);
	\draw (#1) circle (\cz pt);
}

\coordinate (R) at (0,0);

\coordinate (r1) at (0,-1);

\coordinate (r1r) at (-\wi,-2);
\coordinate (r1l) at (\wi,-2);

\draw (R) to (r1);
\draw (r1) to (r1r);
\draw (r1) to (r1l);

\ci{R}
\ci{r1}
\ci{r1r}
\ci{r1l}
\end{tikzpicture}\ \right)= 3\, \begin{tikzpicture}[scale=0.25,baseline={([yshift=-.5ex]current bounding box.center)}]
\def\cz{5}
\def\wi{0.5}

\newcommand{\ci}[1]{	
	\fill[black] (#1) circle (\cz pt);
	\draw (#1) circle (\cz pt);
}

\coordinate (R) at (0,0);

\coordinate (r1) at (0,-1);
\coordinate (r2) at (\wi+\wi,-2);
\coordinate (c2) at (0,-2);
\coordinate (l2) at (-\wi-\wi,-2);

\draw (R) to (r1);
\draw (r1) to (r2);
\draw (r1) to (c2);
\draw (r1) to (l2);

\ci{R}
\ci{r1}
\ci{r2}
\ci{c2}
\ci{l2}
\end{tikzpicture} + 8\, \begin{tikzpicture}[scale=0.25,baseline={([yshift=-.5ex]current bounding box.center)}]
\def\cz{5}
\def\wi{0.5}

\newcommand{\ci}[1]{	
	\fill[black] (#1) circle (\cz pt);
	\draw (#1) circle (\cz pt);
}

\coordinate (R) at (0,0);

\coordinate (r0l) at (-1,-1);
\coordinate (r0r) at (1,-1);
\coordinate (r0l2) at (-1,-2);
\coordinate (r1) at (1,-2);

\coordinate (r1r) at (-0.5,-3);
\coordinate (r1l) at (-1.5,-3);

\draw (R) to (r0l);
\draw (R) to (r0r);
\draw (r0r) to (r1);
\draw (r0l) to (r0l2);
\draw (r0l2) to (r1r);
\draw (r0l2) to (r1l);

\ci{R}
\ci{r0l}
\ci{r0r}
\ci{r0l2}
\ci{r1}
\ci{r1r}
\ci{r1l}
\end{tikzpicture}\ .$ 
Note again that for any $t \in \calT$ there is a unique $f \in \calF$ such that $t=B_+(f)$.

The algebra $\calH$ is not only an algebra but a Hopf algebra. 
One way to define the coproduct $\Delta : \mathcal{H} \to \mathcal{H} \otimes \mathcal{H}$ is given algebraically by using the grafting operator $B_+$. 
The coproduct $\Delta$ is multiplicative and hence we only need to defind for elements in $\calT$. 
For $t=B_+(f) \in \calT$, we define 
$$\Delta(t) = t \otimes \I + ((\text{id} \otimes B_+)\circ \Delta)(f). $$
Then, for example, we find
\begin{align*}
\Delta(\I)& =\I \otimes \I, \\
\Delta(\begin{tikzpicture}[scale=0.25,baseline={([yshift=-.5ex]current bounding box.center)}]
\def\cz{5}
\def\wi{0.5}

\newcommand{\ci}[1]{	
	\fill[black] (#1) circle (\cz pt);
	\draw (#1) circle (\cz pt);
}

\coordinate (R) at (0,0);

\ci{R}

\end{tikzpicture}
                       = \begin{tikzpicture}[scale=0.25,baseline={([yshift=-.5ex]current bounding box.center)}]
\def\cz{5}
\def\wi{0.5}

\newcommand{\ci}[1]{	
	\fill[black] (#1) circle (\cz pt);
	\draw (#1) circle (\cz pt);
}

\coordinate (R) at (0,0);

\ci{R}

\end{tikzpicture}
\def\cz{5}
\def\wi{0.5}

\newcommand{\ci}[1]{	
	\fill[black] (#1) circle (\cz pt);
	\draw (#1) circle (\cz pt);
}

\coordinate (R) at (0,0);

\ci{R}

\end{tikzpicture}
\Delta()
                       &=  \otimes \I + \begin{tikzpicture}[scale=0.25,baseline={([yshift=-.5ex]current bounding box.center)}]
\def\cz{5}
\def\wi{0.5}

\newcommand{\ci}[1]{	
	\fill[black] (#1) circle (\cz pt);
	\draw (#1) circle (\cz pt);
}

\coordinate (R) at (0,0);

\ci{R}

\end{tikzpicture}
\def\cz{5}
\def\wi{0.5}

\newcommand{\ci}[1]{	
	\fill[black] (#1) circle (\cz pt);
	\draw (#1) circle (\cz pt);
}

\coordinate (R) at (0,0);

\ci{R}

\end{tikzpicture}
\Delta(\begin{tikzpicture}[scale=0.25,baseline={([yshift=-.5ex]current bounding box.center)}]
\def\cz{5}
\def\wi{0.5}

\newcommand{\ci}[1]{	
	\fill[black] (#1) circle (\cz pt);
	\draw (#1) circle (\cz pt);
}

\coordinate (R) at (0,0);

\ci{R}

\end{tikzpicture}
\def\cz{5}
\def\wi{0.5}

\newcommand{\ci}[1]{	
	\fill[black] (#1) circle (\cz pt);
	\draw (#1) circle (\cz pt);
}

\coordinate (R) at (0,0);

\ci{R}

\end{tikzpicture}
                       &= \begin{tikzpicture}[scale=0.25,baseline={([yshift=-.5ex]current bounding box.center)}]
\def\cz{5}
\def\wi{0.5}

\newcommand{\ci}[1]{	
	\fill[black] (#1) circle (\cz pt);
	\draw (#1) circle (\cz pt);
}

\coordinate (R) at (0,0);

\ci{R}

\end{tikzpicture}
\def\cz{5}
\def\wi{0.5}

\newcommand{\ci}[1]{	
	\fill[black] (#1) circle (\cz pt);
	\draw (#1) circle (\cz pt);
}

\coordinate (R) at (0,0);

\ci{R}

\end{tikzpicture}
\def\cz{5}
\def\wi{0.5}

\newcommand{\ci}[1]{	
	\fill[black] (#1) circle (\cz pt);
	\draw (#1) circle (\cz pt);
}

\coordinate (R) at (0,0);

\ci{R}

\end{tikzpicture}
\def\cz{5}
\def\wi{0.5}

\newcommand{\ci}[1]{	
	\fill[black] (#1) circle (\cz pt);
	\draw (#1) circle (\cz pt);
}

\coordinate (R) at (0,0);

\ci{R}

\end{tikzpicture}
\def\cz{5}
\def\wi{0.5}

\newcommand{\ci}[1]{	
	\fill[black] (#1) circle (\cz pt);
	\draw (#1) circle (\cz pt);
}

\coordinate (R) at (0,0);

\ci{R}

\end{tikzpicture}
\def\cz{5}
\def\wi{0.5}

\newcommand{\ci}[1]{	
	\fill[black] (#1) circle (\cz pt);
	\draw (#1) circle (\cz pt);
}

\coordinate (R) at (0,0);

\ci{R}

\end{tikzpicture}
\Delta()
                       & =  \otimes \I + \begin{tikzpicture}[scale=0.25,baseline={([yshift=-.5ex]current bounding box.center)}]
\def\cz{5}
\def\wi{0.5}

\newcommand{\ci}[1]{	
	\fill[black] (#1) circle (\cz pt);
	\draw (#1) circle (\cz pt);
}

\coordinate (R) at (0,0);

\ci{R}

\end{tikzpicture}
\def\cz{5}
\def\wi{0.5}

\newcommand{\ci}[1]{	
	\fill[black] (#1) circle (\cz pt);
	\draw (#1) circle (\cz pt);
}

\coordinate (R) at (0,0);

\ci{R}

\end{tikzpicture}
\def\cz{5}
\def\wi{0.5}

\newcommand{\ci}[1]{	
	\fill[black] (#1) circle (\cz pt);
	\draw (#1) circle (\cz pt);
}

\coordinate (R) at (0,0);

\ci{R}

\end{tikzpicture}
\def\cz{5}
\def\wi{0.5}

\newcommand{\ci}[1]{	
	\fill[black] (#1) circle (\cz pt);
	\draw (#1) circle (\cz pt);
}

\coordinate (R) at (0,0);

\ci{R}

\end{tikzpicture}
                       \end{align*}
and so on. 
The last example shows that $\Delta$ is not cocommutative but it is known that $\Delta$ is coassociative, i.e., $(\text{id} \otimes \Delta) \circ \Delta = (\Delta \otimes \text{id}) \circ \Delta$. 
We do not need the counit and the antipode in this paper and hence we omit to define them. (For details, see \cite{CK}, for example.)
\subsection{Rooted tree maps}
Following \cite{T}, we here define rooted tree maps. 
Let the identity map on $\fH$ be assigned to the empty forest, i.e., $\tilde{\I} =\text{id}$. 
For any $f \in \calF \backslash \{\I\}$, 
we define the $\Q$-linear map $\tilde{f} : \fH \to \fH$ by the following conditions. 
\begin{enumerate}[(I)]
    \item If $f=\begin{tikzpicture}[scale=0.25,baseline={([yshift=-.5ex]current bounding box.center)}]
\def\cz{5}
\def\wi{0.5}

\newcommand{\ci}[1]{	
	\fill[black] (#1) circle (\cz pt);
	\draw (#1) circle (\cz pt);
}

\coordinate (R) at (0,0);

\ci{R}

\end{tikzpicture}
    \item $\widetilde{B_+(f)}(x)=-\widetilde{B_+(f)}(y)=R_yR_{x+2y}R_y^{-1}\tilde{f}(x)$, 
    \item If $f=gh$ ($g, h \in \calF$), $\tilde{f}(v) =\tilde{g}(\tilde{h}(v))$ for $v \in \{ x, y\}$, 
    \item $\tilde{f}(wv)=M(\widetilde{\Delta(f)}(w \otimes v))$ for $w \in \fH$, $v \in \{x, y\}$,
\end{enumerate}
where $R_w$ denotes the right multiplication map by $w$, i.e. $R_w(v)=vw$ ($v, w \in \fH$), $M : \fH \otimes \fH \to \fH$ denotes the concatenation product, and $\widetilde{\Delta(f)}=\sum_{(f)}\tilde{f_1} \otimes \tilde{f_2}$ if $\Delta(f) =\sum_{(f)} f_1 \otimes f_2$, which is Sweedler's notation. 

We find that $\tilde{f}(x)$ always ends with $y$ and hence the condition (II) is well-defined. 
We also find that $\tilde{f}(v)$ in condition (III) does not depend on how to decompose $f$ into $g$ and $h$. 
We can also show that conditions (III) and (IV) hold for any $v \in \fH$.
(See \cite[Theorem 1.2]{T}.)
We call $\tilde{f}$ the RTM assigned to $f \in \calH$. 
\begin{ex}
Since $\widetilde{\begin{tikzpicture}[scale=0.25,baseline={([yshift=-.5ex]current bounding box.center)}]
\def\cz{5}
\def\wi{0.5}

\newcommand{\ci}[1]{	
	\fill[black] (#1) circle (\cz pt);
	\draw (#1) circle (\cz pt);
}

\coordinate (R) at (0,0);

\ci{R}

\end{tikzpicture}
\def\cz{5}
\def\wi{0.5}

\newcommand{\ci}[1]{	
	\fill[black] (#1) circle (\cz pt);
	\draw (#1) circle (\cz pt);
}

\coordinate (R) at (0,0);

\ci{R}

\end{tikzpicture}
$\Delta(\begin{tikzpicture}[scale=0.25,baseline={([yshift=-.5ex]current bounding box.center)}]
\def\cz{5}
\def\wi{0.5}

\newcommand{\ci}[1]{	
	\fill[black] (#1) circle (\cz pt);
	\draw (#1) circle (\cz pt);
}

\coordinate (R) at (0,0);

\ci{R}

\end{tikzpicture}
\def\cz{5}
\def\wi{0.5}

\newcommand{\ci}[1]{	
	\fill[black] (#1) circle (\cz pt);
	\draw (#1) circle (\cz pt);
}

\coordinate (R) at (0,0);

\ci{R}

\end{tikzpicture}
\def\cz{5}
\def\wi{0.5}

\newcommand{\ci}[1]{	
	\fill[black] (#1) circle (\cz pt);
	\draw (#1) circle (\cz pt);
}

\coordinate (R) at (0,0);

\ci{R}

\end{tikzpicture}
$$\widetilde{\,\begin{tikzpicture}[scale=0.25,baseline={([yshift=-.5ex]current bounding box.center)}]
\def\cz{5}
\def\wi{0.5}

\newcommand{\ci}[1]{	
	\fill[black] (#1) circle (\cz pt);
	\draw (#1) circle (\cz pt);
}

\coordinate (R) at (0,0);

\ci{R}

\end{tikzpicture}
\def\cz{5}
\def\wi{0.5}

\newcommand{\ci}[1]{	
	\fill[black] (#1) circle (\cz pt);
	\draw (#1) circle (\cz pt);
}

\coordinate (R) at (0,0);

\ci{R}

\end{tikzpicture}
\def\cz{5}
\def\wi{0.5}

\newcommand{\ci}[1]{	
	\fill[black] (#1) circle (\cz pt);
	\draw (#1) circle (\cz pt);
}

\coordinate (R) at (0,0);

\ci{R}

\end{tikzpicture}
\def\cz{5}
\def\wi{0.5}

\newcommand{\ci}[1]{	
	\fill[black] (#1) circle (\cz pt);
	\draw (#1) circle (\cz pt);
}

\coordinate (R) at (0,0);

\ci{R}

\end{tikzpicture}
Then we calculate
$$\widetilde{\,\,}(x)=\widetilde{B_+(\begin{tikzpicture}[scale=0.25,baseline={([yshift=-.5ex]current bounding box.center)}]
\def\cz{5}
\def\wi{0.5}

\newcommand{\ci}[1]{	
	\fill[black] (#1) circle (\cz pt);
	\draw (#1) circle (\cz pt);
}

\coordinate (R) at (0,0);

\ci{R}

\end{tikzpicture}
\def\cz{5}
\def\wi{0.5}

\newcommand{\ci}[1]{	
	\fill[black] (#1) circle (\cz pt);
	\draw (#1) circle (\cz pt);
}

\coordinate (R) at (0,0);

\ci{R}

\end{tikzpicture}
\def\cz{5}
\def\wi{0.5}

\newcommand{\ci}[1]{	
	\fill[black] (#1) circle (\cz pt);
	\draw (#1) circle (\cz pt);
}

\coordinate (R) at (0,0);

\ci{R}

\end{tikzpicture}
\def\cz{5}
\def\wi{0.5}

\newcommand{\ci}[1]{	
	\fill[black] (#1) circle (\cz pt);
	\draw (#1) circle (\cz pt);
}

\coordinate (R) at (0,0);

\ci{R}

\end{tikzpicture}
\end{ex}
\subsection{The algebra $\fH^1_{\di}$}
Following \cite{HMO}, we define the product $\di : \fH \times \fH \to \fH$ by $\Q$-bilinearlity and
\begin{align*}
w \di 1 & = 1 \di w = w, \\
vx \di wx & = (v \di wx)x -(vy \di w)x, \\
vx \di wy & = (v \di wy)x +(vx \di w)y, \\
vy \di wx & = (v \di wx)y +(vy \di w)x, \\
vy \di wy & = (v \di wy)y -(vx \di w)y
\end{align*}
for $v, w \in \fH$. 
Then we find that $\fH$ is an associative and commutative $\Q$-algebra with respect to the product $\di$, 
and $\fH^1\coloneqq \Q+\fH y$ is its subalgebra which is denoted by $\fH^1_\di$. 
The product $\di$ is closely related to the usual harmonic product on $\fH$. 
Put $z\coloneqq x+y$. A key property is 
\begin{align}\label{di1}
vz \di w = v \di wz = (v\di w) z 
\end{align}
for $v,w\in\mathfrak{H}$, and thus we also find 
\begin{align}\label{di2}
(vz^{k-1} y \diamond w z^{l-1}y)
 =(v \diamond w z^{l-1}y)z^{k-1}y+(vz^{k-1} y \diamond w)z^{l-1}y-(v\diamond w)z^{k+l-1}y
\end{align}
for $k, l \geq 1$ and $v,w\in\mathfrak{H}$. 

For $f \in \calF$, we set a polynomial $F_f \in \fH^1$ recursively by 
\begin{align*}
F_\I & =1,\ F_{\begin{tikzpicture}[scale=0.25,baseline={([yshift=-.5ex]current bounding box.center)}]
\def\cz{5}
\def\wi{0.5}

\newcommand{\ci}[1]{	
	\fill[black] (#1) circle (\cz pt);
	\draw (#1) circle (\cz pt);
}

\coordinate (R) at (0,0);

\ci{R}

\end{tikzpicture}
F_t & = R_yR_{x+2y}R_y^{-1}(F_f)\ \ \text{if}\ \ t=B_+(f)\ \ \text{with}\ \ f \in \calF \backslash \{ \I \}, \\
F_f & = F_g \di F_h \ \ \text{if}\  \ f=gh.
\end{align*}
The subscript of $F$ is extended linearly. 
Then we have the following theorem. 
\begin{thm}(\cite[Theorem 1]{MT})\label{MT}
For any $f \in \calH$ and any $w \in \fH$, we have
$$\tilde{f}(xw) = x(F_f \di w). $$
\end{thm}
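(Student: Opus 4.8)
The plan is to prove the identity by induction on $\deg(f)$, after reducing to $f\in\calF$: both $f\mapsto\tilde f$ and $f\mapsto F_f$ are the $\Q$-linear extensions of their values on rooted forests, and both sides of the asserted identity are $\Q$-linear in $w$, so it suffices to take $f$ a rooted forest and $w$ a word in $x,y$. The base case $f=\I$ is immediate from $\tilde\I=\id$, $F_\I=1$ and $1\di w=w$. For the inductive step, $f$ is either a product of two nontrivial forests or a single tree, and the two are treated differently.

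If $f=gh$ with $g,h\in\calF\setminus\{\I\}$, then $\deg g,\deg h<\deg f$, and using (III), the recursion $F_f=F_g\di F_h$, the inductive hypothesis for $h$, then the inductive hypothesis for $g$ applied to the word $F_h\di w$, and the associativity of $\di$,
\[
\tilde f(xw)=\tilde g\bigl(\tilde h(xw)\bigr)=\tilde g\bigl(x(F_h\di w)\bigr)=x\bigl(F_g\di(F_h\di w)\bigr)=x\bigl((F_g\di F_h)\di w\bigr)=x(F_f\di w).
\]
So the product case costs only the associativity of $\di$, which is available.

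The content is the case $f=B_+(g)$ with $g\in\calF$; here I would run a secondary induction on the length of $w$. For $w=1$ the claim is $\tilde f(x)=xF_f$, which follows from (I) (if $g=\I$) or (II) (if $g\neq\I$), the inductive hypothesis $\tilde g(x)=xF_g$, the recursion $F_f=R_yR_{x+2y}R_y^{-1}(F_g)$, and the commutation of $R_yR_{x+2y}R_y^{-1}$ with left multiplication by $x$. For $w=w'v$ with $v\in\{x,y\}$, apply (IV) and expand $\Delta(f)=f\otimes\I+\sum_{(g)}g_1\otimes B_+(g_2)$, then split off the single term with $g_2=\I$ (namely $g\otimes B_+(\I)$), which contributes $\epsilon(v)\,\tilde g(xw')\,xy$ with $\epsilon(x)=1$, $\epsilon(y)=-1$. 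In the remaining sum $\widetilde{B_+(g_2)}(v)=\epsilon(v)R_yR_{x+2y}R_y^{-1}\bigl(\tilde{g_2}(x)\bigr)$ by (II); substituting the inductive hypotheses $\tilde{g_1}(xw')=x(F_{g_1}\di w')$, $\tilde{g_2}(x)=xF_{g_2}$, $\tilde f(xw')=x(F_f\di w')$, one pulls $R_yR_{x+2y}R_y^{-1}$ outside the sum, recombines $\sum_{(g)}\tilde{g_1}(xw')\,\tilde{g_2}(x)=\tilde g(xw'x)$ using (IV) for $g$, applies the inductive hypothesis for $g$ to $\tilde g(xw'x)$ and $\tilde g(xw')$, and uses the defining $\di$-relation $F_g\di(w'x)=\bigl(R_y^{-1}(F_g)\di(w'x)\bigr)y+(F_g\di w')\,x$. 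After this bookkeeping, the assertion for $(f,\,w'v)$ becomes the purely $\di$-algebraic identity
\[
F_f\di(w'v)=(F_f\di w')\,v+\epsilon(v)\Bigl((F_g\di w')\,xy+\bigl(R_y^{-1}(F_g)\di(w'x)\bigr)(x+2y)\,y\Bigr),
\]
which is valid also when $g=\I$ (then the $g_2\neq\I$ sum is empty, $R_y^{-1}(F_g)$ never appears, and the identity reads $y\di(w'v)=(y\di w')v+\epsilon(v)\,w'xy$).

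What remains is to verify this last identity in $(\fH,\di)$; this is the step I expect to be the main obstacle, though with the reduction above it is short. For $v=x$: since $F_f=R_y^{-1}(F_g)(x+2y)\,y$ ends in $y$, peel the last letters of $F_f\di(w'x)$ via the relation $ay\di bx=(a\di bx)y+(ay\di b)x$; this splits off $(F_f\di w')\,x$ and leaves $\bigl(R_y^{-1}(F_g)(x+2y)\di(w'x)\bigr)y$, which, after writing $x+2y=z+y$ and using \eqref{di1} to move the factor $z$ through $\di$, equals exactly $(F_g\di w')\,xy+\bigl(R_y^{-1}(F_g)\di(w'x)\bigr)(x+2y)\,y$. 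The case $v=y$ then follows formally: summing the $v=x$ and $v=y$ identities, the right-hand sides add to $(F_f\di w')\,z$, while $F_f\di(w'x)+F_f\di(w'y)=F_f\di(w'z)=(F_f\di w')\,z$ by \eqref{di1}, so $v=y$ is forced once $v=x$ is known. The conceptual point — and the heart of the argument — is that $\tilde f(xw)$ carries two a priori unrelated recursive structures, one inherited from the Hopf coproduct $\Delta$ and one from the rules defining $\di$, and making the two agree is exactly what \eqref{di1} enables.
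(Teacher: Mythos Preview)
The paper does not prove this theorem; it is quoted from \cite{MT} as a known result and used as a black box, so there is no in-paper argument to compare your attempt against.

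Your double induction (on $\deg f$, and in the tree case $f=B_+(g)$ on the length of $w$) is the natural approach and is correct. One point deserves a line of clarification: after you split off the $g_2=\I$ term from $(\id\otimes B_+)\Delta(g)$, the recombination via (IV) yields $\tilde g(xw'x)-\tilde g(xw')\,x$, not $\tilde g(xw'x)$ alone. Substituting the inductive hypothesis for $g$ and your $\di$-relation $F_g\di(w'x)=(R_y^{-1}(F_g)\di w'x)y+(F_g\di w')\,x$, the subtracted piece $\tilde g(xw')\,x=x(F_g\di w')\,x$ cancels the second summand, leaving exactly $x(R_y^{-1}(F_g)\di w'x)\,y$ inside $R_yR_{x+2y}R_y^{-1}$; this is what produces the term $(R_y^{-1}(F_g)\di w'x)(x+2y)\,y$ in your displayed identity. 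With that bookkeeping made explicit, the verification of the $\di$-identity for $v=x$ via $x+2y=z+y$ and \eqref{di1}, and the deduction of $v=y$ by adding the two cases and using $F_f\di(w'z)=(F_f\di w')z$, are correct as you wrote them.
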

\section{Main theorems}\label{sec3}
Let $\rho : \calH \to \mathrm{End}(\fH)$ be $\rho(f) = \tilde{f}$. 
The map $\rho$ is an algebra homomorphism and  
$\widetilde{\calH}\coloneqq \Imm \rho$ is the space of rooted tree maps. 
Let $\mathcal{U}_0\coloneqq\{\I\}$ and 
$\mathcal{U}_d\coloneqq B_+(\mathcal{U}_{d-1}) \cup \begin{tikzpicture}[scale=0.25,baseline={([yshift=-.5ex]current bounding box.center)}]
\def\cz{5}
\def\wi{0.5}

\newcommand{\ci}[1]{	
	\fill[black] (#1) circle (\cz pt);
	\draw (#1) circle (\cz pt);
}

\coordinate (R) at (0,0);

\ci{R}

\end{tikzpicture}
we set 
$$B_+(X)\coloneqq\{B_+(f)\mid f \in X \} \ \ \text{and} \ \ \begin{tikzpicture}[scale=0.25,baseline={([yshift=-.5ex]current bounding box.center)}]
\def\cz{5}
\def\wi{0.5}

\newcommand{\ci}[1]{	
	\fill[black] (#1) circle (\cz pt);
	\draw (#1) circle (\cz pt);
}

\coordinate (R) at (0,0);

\ci{R}

\end{tikzpicture}
\def\cz{5}
\def\wi{0.5}

\newcommand{\ci}[1]{	
	\fill[black] (#1) circle (\cz pt);
	\draw (#1) circle (\cz pt);
}

\coordinate (R) at (0,0);

\ci{R}

\end{tikzpicture}
for $X \subset \calH$.

\begin{thm}\label{main1}
The set
$\rho\left(\bigsqcup_{d \geq0} \mathcal{U}_{d}\right)$ forms a basis of $\widetilde{\calH}$.
\end{thm}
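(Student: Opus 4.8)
The plan is to transfer the problem, via Theorem~\ref{MT}, into the algebra $\fH^1_\di$. Taking $w=1$ in Theorem~\ref{MT} gives $\tilde f(x)=xF_f$, so $F_f$ is read off from the linear map $\tilde f$; hence $\Phi\colon\widetilde\calH\to\fH^1$, $\tilde f\mapsto F_f$, is a well-defined $\Q$-linear map, and since $F_{fg}=F_f\di F_g$ while $\rho$ is an algebra homomorphism, $\Phi$ is in fact a homomorphism $(\widetilde\calH,\circ)\to(\fH^1,\di)$. Because the RTM attached to a forest of degree $d$ raises the word-length grading of $\fH$ by exactly $d$, the space $\widetilde\calH$ is graded, $\widetilde\calH=\bigoplus_{d\ge0}\widetilde\calH_d$, with $\Phi(\widetilde\calH_0)=\Q\subseteq\fH^1$ and $\Phi(\widetilde\calH_d)\subseteq\fH_{d-1}y$ for $d\ge1$. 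I claim the theorem is equivalent to the conjunction of: (a) for every $d\ge1$, $\{F_g\mid g\in\mathcal{U}_d\}$ is a $\Q$-basis of $\fH_{d-1}y$; and (b) $\Phi$ is injective, i.e.\ $F_f=0$ implies $\tilde f=0$ for $f\in\calH$. Indeed, (a) shows $\Phi$ is onto $\fH^1$ and that $\#\mathcal{U}_d=2^{d-1}=\dim_\Q\fH_{d-1}y$ (so the $2^{d-1}$ recipes of $B_+$'s and multiplications by the one-vertex tree produce pairwise distinct forests), and with (b) we get $\Phi\colon\widetilde\calH_d\xrightarrow{\ \sim\ }\fH_{d-1}y$; hence $\{\tilde g\mid g\in\mathcal{U}_d\}$, whose $\Phi$-image $\{F_g\}$ is a basis, is itself a basis of $\widetilde\calH_d$.

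For (a) I would strip the trailing $y$: every $F_g$ lies in $\fH y$ (this subspace is preserved by $R_yR_{x+2y}R_y^{-1}$ and, being the positive-degree part of the subalgebra $\fH^1_\di$, is closed under $\di$), so write $F_g=G_gy$ with $G_g\in\fH_{d-1}$; then $G_\bullet=1$, $G_{B_+(g)}=G_g(x+2y)$ and $G_{\bullet g}=G_gy-(x\di G_g)$. Since $\#\mathcal{U}_d\le2^{d-1}=\dim\fH_{d-1}$, it suffices to prove the $G_g$ linearly independent, by induction on $d$. The $2^{d-2}$ vectors $G_{B_+(g)}$ lie in $\operatorname{Im}R_{x+2y}$ and are independent there by the inductive hypothesis, so apply $\pi:=2\partial_x-\partial_y$, where $\partial_v$ strips a final $v$ and kills words not ending in $v$: one checks $\pi|_{\operatorname{Im}R_{x+2y}}=0$ and $\pi R_y=-\mathrm{id}$, whence $\pi(G_{\bullet g})=-(I+2X'-X'')G_g$, writing $x\di G=(X'G)x+(X''G)y$. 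Thus independence of all $2^{d-1}$ vectors comes down to invertibility of $I+2X'-X''$ on $\fH_{d-2}$ — equivalently, $\mu_2\colon G\mapsto Gy-x\di G$ is injective with $\operatorname{Im}\mu_2\cap\operatorname{Im}R_{x+2y}=0$; in coordinates this reads $\det M_d=(\det M_{d-1})^2\det(I+2X'-X'')$ for the matrices $M_d$ of the $G_g$. The operator $I+2X'-X''$ is $3\cdot\mathrm{id}$ plus a correction coming from $x\di(\cdot)$, and showing it nonsingular — e.g.\ by exhibiting a basis in which it is triangular with diagonal $3$, or by a generating-function argument — is the one genuinely computational point of (a).

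Part (b) is where I expect the main obstacle. Let $F_f=0$. Comparing degree-$0$ parts forces the $\I$-coefficient of $f$ to vanish, so $\tilde f(1)=0$, $\tilde f(x)=xF_f=0$, and $\tilde f(y)=-\tilde f(x)=0$ — here I use the identity $\tilde g(x)=-\tilde g(y)$, valid for all $g\ne\I$ and proved by induction over the tree/product structure; and by Theorem~\ref{MT}, $\tilde f$ vanishes on all of $x\fH$. It remains to handle $y\fH$, which I would do by induction on word length, simultaneously over all of $\ker F$: for a word $yw'v$ with $v\in\{x,y\}$, condition (IV) gives $\tilde f(yw'v)=M\!\bigl(\widetilde{\Delta(f)}(yw'\otimes v)\bigr)$; if $\Delta f$ can be written inside $\ker F\otimes\calH+\calH\otimes\ker F$, then each resulting term is either $\tilde k(yw')\cdot(\cdots)$ with $k\in\ker F$ — which vanishes by the inductive hypothesis, $yw'$ being shorter and $k$ having zero $\I$-coefficient — or $(\cdots)\cdot\tilde k(v)$ with $k\in\ker F$ — which vanishes like the base case — so $\tilde f(yw'v)=0$.

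Hence (b) reduces to showing that $\ker F$ is a coideal of the Connes--Kreimer Hopf algebra; equivalently, that $\fH^1_\di$ — with $\di$ and the deconcatenation coproduct in the alphabet $\{z_k:=x^{k-1}y\}$ — is a bialgebra for which $F\colon\calH\to\fH^1_\di$ is a bialgebra morphism. I would obtain this either by citing the Hopf structure of $\fH^1_\di$ from \cite{HMO} and checking that $F$ intertwines the coproducts, or by proving the intertwining directly by induction on the recursion for $F_f$: the multiplicative step is immediate once $\Delta_\di$ is multiplicative for $\di$, and the $B_+$ step amounts to the assertion that $R_yR_{x+2y}R_y^{-1}$ — which in the $z$-alphabet sends a final letter $z_k$ to $z_{k+1}+2z_kz_1$ — is a Hochschild $1$-cocycle for deconcatenation. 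Establishing this cocycle/bialgebra property is the heart of the argument; with it in place, (a) and (b) assemble as in the first paragraph into Theorem~\ref{main1}.
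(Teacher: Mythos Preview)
Your reduction to statements (a) and (b) is exactly the skeleton the paper uses (Lemma~\ref{lem2} is your (b), and the matrix argument in the proof of Theorem~\ref{main1} is your (a)), but both of your fillings are incomplete, and one of them is miscalibrated.

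For (b) you propose to show that $\Ker\sigma$ is a coideal by exhibiting $\sigma=F_{(-)}$ as a bialgebra morphism to $\fH^1$ with the deconcatenation coproduct in the letters $z_k:=x^{k-1}y$. With that alphabet the claim is \emph{false}: already for the two-vertex ladder $t=B_+(\bullet)$ one has $F_t=xy+2y^2=z_2+2z_1^2$, so $\Delta_{\mathrm{dec}}(F_t)$ has $2\,z_1\otimes z_1$ in the middle, whereas $(\sigma\otimes\sigma)\Delta_{CK}(t)=F_t\otimes1+y\otimes y+1\otimes F_t$ has only $z_1\otimes z_1$. The correct alphabet is $w_k:=z^{k-1}y$ with $z=x+y$ (in which $F_t=w_2+w_1^2$ and the two coproducts do match); even then, the Hochschild-cocycle verification you defer is the entire content of (b) and is not carried out. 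The paper avoids this route altogether: its Lemma~\ref{lem1} proves $\tilde f(x)=0\Rightarrow\tilde f=0$ by invoking from \cite{BT2} that every $w\in x\fH y$ equals $\tilde g(x)$ for some $g\in\calH$, and then using commutativity of RTMs to get $\tilde f(w)=\tilde f(\tilde g(x))=\tilde g(\tilde f(x))=0$, extending to all of $\fH$ via the identities $\tilde f(yw)=(x+y)\tilde f(w)-\tilde f(xw)$ and $\tilde f(wx)=\tilde f(w)(x+y)-\tilde f(wy)$. This is a two-line argument once the surjectivity input is available, and it makes no bialgebra claim.

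For (a) your inductive reduction to the invertibility of $I+2X'-X''$ is correct, but you stop at the point where the work begins; ``triangular with diagonal $3$'' is not obvious, since the $\di$-corrections do not lower any natural filtration. The paper again leans on \cite{BT2}: it packages the $F_g$ for $g\in\mathcal{U}_d$ into a vector and shows it equals $B_d w_d$ modulo~$2$, where $B_d$ is the matrix of \cite[Lemma~3.6]{BT2}; invertibility of $B_d$ mod~$2$ is \cite[Lemma~3.5]{BT2}, and linear independence follows. If you want to salvage your direct approach, the cleanest fix is to work mod~$2$ as well: then $I+2X'-X''\equiv I+X''$, and one only needs nilpotence of $X''$ mod~$2$, which is tractable by induction on word length.
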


For $ m, n \geq 1$, let
$$f_{m, n}\coloneqq 
  \hspace{15pt}\hspace{-15pt}\begin{tikzpicture}[scale=0.25,baseline={([yshift=-.5ex]current bounding box.center)}]
\def\cz{5}
\def\wi{0.5}

\newcommand{\ci}[1]{	
	\fill[black] (#1) circle (\cz pt);
	\draw (#1) circle (\cz pt);
}

\coordinate (R) at (0,0);

\coordinate (r1) at (0,-1);
\coordinate (r2) at (0,-2);
\coordinate (r3) at (0,-4);
\coordinate (r4) at (0,-5);

\draw (R) to (r1);
\draw (r1) to (r2);
\draw[dotted] (r2) to (r3);
\draw (r3) to (r4);

\ci{R}
\ci{r1}
\ci{r2}
\ci{r3}
\ci{r4}

\draw[decoration={brace,raise=5pt},decorate]
  (r4) -- node[left=6pt] {\tiny{$m$}} (R);
\end{tikzpicture}
  \ \begin{tikzpicture}[scale=0.25,baseline={([yshift=-.5ex]current bounding box.center)}]
\def\cz{5}
\def\wi{0.5}

\newcommand{\ci}[1]{	
	\fill[black] (#1) circle (\cz pt);
	\draw (#1) circle (\cz pt);
}

\coordinate (R) at (0,0);

\coordinate (r1) at (0,-1);
\coordinate (r2) at (0,-2);
\coordinate (r3) at (0,-4);
\coordinate (r4) at (0,-5);

\draw (R) to (r1);
\draw (r1) to (r2);
\draw[dotted] (r2) to (r3);
\draw (r3) to (r4);

\ci{R}
\ci{r1}
\ci{r2}
\ci{r3}
\ci{r4}

\draw[decoration={brace,raise=5pt},decorate]
  (r4) -- node[left=6pt] {\tiny{$n$}} (R);
\end{tikzpicture} 
  - \sum_{{\scriptsize \substack{0 \leq i \leq m-1 \\ 0 \leq j \leq n-1}}} 
  \begin{tikzpicture}[scale=0.25,baseline={([yshift=-.5ex]current bounding box.center)}]
\def\cz{5}
\def\wi{0.5}

\newcommand{\ci}[1]{	
	\fill[black] (#1) circle (\cz pt);
	\draw (#1) circle (\cz pt);
}

\coordinate (R) at (0,0);

\coordinate (r1) at (0,-1);
\coordinate (r2) at (0,-2);
\coordinate (r3) at (0,-4);
\coordinate (r4) at (0,-5);
\coordinate (r5) at (-1,-6);
\coordinate (r6) at (1,-6);
\coordinate (r7) at (0,-7);
\coordinate (r13) at (0,-8);
\coordinate (r14) at (0,-9);
\coordinate (r15) at (0,-11);
\coordinate (r16) at (0,-12);
\coordinate (r8) at (2,-7);
\coordinate (r9) at (2,-8);
\coordinate (r10) at (2,-9);
\coordinate (r11) at (2,-11);
\coordinate (r12) at (2,-12);
\draw (R) to (r1);
\draw (r1) to (r2);
\draw[dotted] (r2) to (r3);
\draw (r3) to (r4);
\draw (r4) to (r5);
\draw (r4) to (r6);
\draw (r6) to (r7);%
\draw (r6) to (r8);
\draw (r8) to (r9);
\draw (r9) to (r10);
\draw[dotted] (r10) to (r11);
\draw (r11) to (r12);
\draw (r7) to (r13);
\draw (r13) to (r14);
\draw[dotted] (r14) to (r15);
\draw (r15) to (r16);
\ci{R}
\ci{r1}
\ci{r2}
\ci{r3}
\ci{r4}
\ci{r5}
\ci{r6}
\ci{r7}
\ci{r8}
\ci{r9}
\ci{r10}
\ci{r11}
\ci{r12}
\ci{r13}
\ci{r14}
\ci{r15}
\ci{r16}

\draw[decoration={brace,mirror, raise=5pt},decorate]
  (r4) -- node[right=6pt] {\tiny{$m-i+n-j-2$}} (R);

\draw[decoration={brace,mirror, raise=5pt},decorate]
  (r12) -- node[right=6pt] {\tiny{$j$}} (r8);

\draw[decoration={brace,mirror, raise=5pt},decorate]
  (r16) -- node[right=6pt] {\tiny{$i$}} (r7);
  
\end{tikzpicture}\hspace{-40pt}\hspace{40pt}
  + \sum_{{\scriptsize \substack{0 \leq i \leq m-1 \\ 0 \leq j \leq n-1 \\ (i, j) \neq (0, 0)}}} 
 \begin{tikzpicture}[scale=0.25,baseline={([yshift=-.5ex]current bounding box.center)}]
\def\cz{5}
\def\wi{0.5}

\newcommand{\ci}[1]{	
	\fill[black] (#1) circle (\cz pt);
	\draw (#1) circle (\cz pt);
}

\coordinate (R) at (0,0);

\coordinate (r1) at (0,-1);
\coordinate (r2) at (0,-2);
\coordinate (r3) at (0,-4);
\coordinate (r4) at (0,-5);
\coordinate (r5) at (-2,-6);
\coordinate (r6) at (2,-6);
\coordinate (r7) at (0,-6);
\coordinate (r12) at (0,-7);
\coordinate (r13) at (0,-8);
\coordinate (r14) at (0,-10);
\coordinate (r15) at (0,-11);
\coordinate (r8) at (2,-7);
\coordinate (r9) at (2,-8);
\coordinate (r10) at (2,-10);
\coordinate (r11) at (2,-11);
\draw (R) to (r1);
\draw (r1) to (r2);
\draw[dotted] (r2) to (r3);
\draw (r3) to (r4);
\draw (r4) to (r5);
\draw (r4) to (r6);
\draw (r4) to (r7);
\draw (r6) to (r8);
\draw (r8) to (r9);
\draw[dotted]  (r9) to (r10);
\draw(r10) to (r11);
\draw (r7) to (r12);
\draw (r12) to (r13);
\draw[dotted]  (r3) to (r14);
\draw(r14) to (r15);
\ci{R}
\ci{r1}
\ci{r2}
\ci{r3}
\ci{r4}
\ci{r5}
\ci{r6}
\ci{r7}
\ci{r8}
\ci{r9}
\ci{r10}
\ci{r11}
\ci{r12}
\ci{r13}
\ci{r14}
\ci{r15}
\draw[decoration={brace,mirror, raise=5pt},decorate]
  (r4) -- node[right=6pt] {\tiny{$m-i+n-j-1$}} (R);

\draw[decoration={brace,mirror, raise=5pt},decorate]
  (r11) -- node[right=6pt] {\tiny{$j$}} (r6);
 
\draw[decoration={brace,mirror, raise=5pt},decorate]
  (r15) -- node[right=6pt] {\tiny{$i$}} (r7);

\end{tikzpicture}\hspace{-40pt}\hspace{40pt} \in \calH.  
 $$
For example,
\begin{align*}
f_{2, 2} & = \  - 2 \, \begin{tikzpicture}[scale=0.25,baseline={([yshift=-.5ex]current bounding box.center)}]
\def\cz{5}
\def\wi{0.5}

\newcommand{\ci}[1]{	
	\fill[black] (#1) circle (\cz pt);
	\draw (#1) circle (\cz pt);
}

\coordinate (R) at (0,0);

\coordinate (l1) at (-\wi,-1);
\coordinate (r1) at (\wi,-1);

\coordinate (r2) at (\wi,-2);

\draw (R) to (r1);
\draw (R) to (l1);
\draw (r1) to (r2);

\ci{R}
\ci{r1}
\ci{l1}
\ci{r2}
\end{tikzpicture}- \begin{tikzpicture}[scale=0.25,baseline={([yshift=-.5ex]current bounding box.center)}]
\def\cz{5}
\def\wi{0.5}

\newcommand{\ci}[1]{	
	\fill[black] (#1) circle (\cz pt);
	\draw (#1) circle (\cz pt);
}

\coordinate (R) at (0,0);

\ci{R}

\end{tikzpicture}
\def\cz{5}
\def\wi{0.5}

\newcommand{\ci}[1]{	
	\fill[black] (#1) circle (\cz pt);
	\draw (#1) circle (\cz pt);
}

\coordinate (R) at (0,0);

\coordinate (r1) at (0,-1);

\coordinate (r1r) at (-\wi,-2);
\coordinate (r1l) at (\wi,-2);

\draw (R) to (r1);
\draw (r1) to (r1r);
\draw (r1) to (r1l);

\ci{R}
\ci{r1}
\ci{r1r}
\ci{r1l}
\end{tikzpicture}+ \begin{tikzpicture}[scale=0.25,baseline={([yshift=-.5ex]current bounding box.center)}]
\def\cz{5}
\def\wi{0.5}

\newcommand{\ci}[1]{	
	\fill[black] (#1) circle (\cz pt);
	\draw (#1) circle (\cz pt);
}

\coordinate (R) at (0,0);

\coordinate (r1) at (0,-1);
\coordinate (r1r) at (-\wi-\wi,-1);
\coordinate (r1l) at (\wi+\wi,-1);

\draw (R) to (r1);
\draw (R) to (r1r);
\draw (R) to (r1l);

\ci{R}
\ci{r1}
\ci{r1r}
\ci{r1l}
\end{tikzpicture}\, ,\\
f_{2, 3} & =  \  -\begin{tikzpicture}[scale=0.25,baseline={([yshift=-.5ex]current bounding box.center)}]
\def\cz{5}
\def\wi{0.5}

\newcommand{\ci}[1]{	
	\fill[black] (#1) circle (\cz pt);
	\draw (#1) circle (\cz pt);
}

\coordinate (R) at (0,0);

\coordinate (r1) at (\wi,-1);
\coordinate (l1) at (-\wi,-1);
\coordinate (r2) at (\wi,-2);
\coordinate (r3) at (\wi,-3);

\draw (R) to (r1);
\draw (R) to (l1);
\draw (r1) to (r2);
\draw (r2) to (r3);

\ci{R}
\ci{r1}
\ci{l1}
\ci{r2}
\ci{r3}
\end{tikzpicture} 
-\begin{tikzpicture}[scale=0.25,baseline={([yshift=-.5ex]current bounding box.center)}]
\def\cz{5}
\def\wi{0.5}

\newcommand{\ci}[1]{	
	\fill[black] (#1) circle (\cz pt);
	\draw (#1) circle (\cz pt);
}

\coordinate (R) at (0,0);

\ci{R}

\end{tikzpicture}
\def\cz{5}
\def\wi{0.5}

\newcommand{\ci}[1]{	
	\fill[black] (#1) circle (\cz pt);
	\draw (#1) circle (\cz pt);
}

\coordinate (R) at (0,0);

\coordinate (l1) at (-\wi,-1);
\coordinate (r1) at (\wi,-1);

\coordinate (r2) at (\wi,-2);

\draw (R) to (r1);
\draw (R) to (l1);
\draw (r1) to (r2);

\ci{R}
\ci{r1}
\ci{l1}
\ci{r2}
\end{tikzpicture} - \begin{tikzpicture}[scale=0.25,baseline={([yshift=-.5ex]current bounding box.center)}]
\def\cz{5}
\def\wi{0.5}

\newcommand{\ci}[1]{	
	\fill[black] (#1) circle (\cz pt);
	\draw (#1) circle (\cz pt);
}

\coordinate (R) at (0,0);

\coordinate (r1) at (\wi,-1);
\coordinate (l1) at (-\wi,-1);
\coordinate (r2) at (\wi+\wi,-2);
\coordinate (l2) at (0,-2);

\draw (R) to (r1);
\draw (R) to (l1);
\draw (r1) to (r2);
\draw (r1) to (l2);

\ci{R}
\ci{r1}
\ci{l1}
\ci{r2}
\ci{l2}
\end{tikzpicture}
- \begin{tikzpicture}[scale=0.25,baseline={([yshift=-.5ex]current bounding box.center)}]
\def\cz{5}
\def\wi{0.5}

\newcommand{\ci}[1]{	
	\fill[black] (#1) circle (\cz pt);
	\draw (#1) circle (\cz pt);
}

\coordinate (R) at (0,0);

\coordinate (r1) at (0,-1);
\coordinate (r2) at (\wi,-2);
\coordinate (l2) at (-\wi,-2);
\coordinate (r3) at (\wi,-3);

\draw (R) to (r1);
\draw (r1) to (r2);
\draw (r1) to (l2);
\draw (r2) to (r3);

\ci{R}
\ci{r1}
\ci{r2}
\ci{r3}
\ci{l2}
\end{tikzpicture} + \begin{tikzpicture}[scale=0.25,baseline={([yshift=-.5ex]current bounding box.center)}]
\def\cz{5}
\def\wi{0.5}

\newcommand{\ci}[1]{	
	\fill[black] (#1) circle (\cz pt);
	\draw (#1) circle (\cz pt);
}

\coordinate (R) at (0,0);

\coordinate (r1) at (\wi+\wi,-1);
\coordinate (c1) at (0,-1);
\coordinate (l1) at (-\wi-\wi,-1);
\coordinate (r2) at (\wi+\wi,-2);

\draw (R) to (r1);
\draw (R) to (c1);
\draw (R) to (l1);
\draw (r1) to (r2);

\ci{R}
\ci{r1}
\ci{c1}
\ci{l1}
\ci{r2}
\end{tikzpicture} +\begin{tikzpicture}[scale=0.25,baseline={([yshift=-.5ex]current bounding box.center)}]
\def\cz{5}
\def\wi{0.5}

\newcommand{\ci}[1]{	
	\fill[black] (#1) circle (\cz pt);
	\draw (#1) circle (\cz pt);
}

\coordinate (R) at (0,0);

\coordinate (r1) at (0,-1);
\coordinate (r2) at (\wi+\wi,-2);
\coordinate (c2) at (0,-2);
\coordinate (l2) at (-\wi-\wi,-2);

\draw (R) to (r1);
\draw (r1) to (r2);
\draw (r1) to (c2);
\draw (r1) to (l2);

\ci{R}
\ci{r1}
\ci{r2}
\ci{c2}
\ci{l2}
\end{tikzpicture} + \begin{tikzpicture}[scale=0.25,baseline={([yshift=-.5ex]current bounding box.center)}]
\def\cz{5}
\def\wi{0.5}

\newcommand{\ci}[1]{	
	\fill[black] (#1) circle (\cz pt);
	\draw (#1) circle (\cz pt);
}

\coordinate (R) at (0,0);

\coordinate (r1) at (0,-1);
\coordinate (r2) at (0,-2);
\coordinate (r3) at (\wi,-3);
\coordinate (l3) at (-\wi,-3);

\draw (R) to (r1);
\draw (r1) to (r2);
\draw (r2) to (r3);
\draw (r2) to (l3);

\ci{R}
\ci{r1}
\ci{r2}
\ci{r3}
\ci{l3}
\end{tikzpicture}\, .
\end{align*}
\begin{thm}\label{main2}
For any $m, n \geq 1$, we have $\rho(f_{m, n})=0$. 
\end{thm}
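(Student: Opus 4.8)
The plan is to use Theorem~\ref{MT} to turn $\rho(f_{m,n})=0$ into a single identity in $\fH$ for the product $\diamond$, and then to prove that identity by an elementary but lengthy computation with the harmonic-type recursion~\eqref{di2}.

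\emph{Step 1: reduction to a $\diamond$-identity.} I first claim that for $f\in\calH$ with vanishing $\I$-component one has $\rho(f)=0$ if and only if $F_f=0$. By Theorem~\ref{MT}, $\tilde f(x)=x(F_f\diamond 1)=xF_f$; this gives one direction, and shows that $F_f=0$ forces $\tilde f$ to vanish on $x\fH$, while $\tilde f(1)=0$ because the $\I$-component of $f$ vanishes. To control $\tilde f$ on the rest of $\fH$ I would use the elementary fact $\tilde f(zw)=z\,\tilde f(w)$ (with $z=x+y$), valid for all $f\in\calH$ and $w\in\fH$ and proved by induction on the length of $w$ straight from condition~(IV); then $y=z-x$ gives $\tilde f(yw)=z\,\tilde f(w)-x(F_f\diamond w)$, so when $F_f=0$ an induction on the length of $w$ yields $\tilde f\equiv 0$. (This equivalence also falls out of the proof of Theorem~\ref{main1} in \S\ref{sec4}.) Since $f_{m,n}$ is a combination of forests of positive degree, it therefore suffices to prove the identity $F_{f_{m,n}}=0$ in $\fH$.

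\emph{Step 2: rewriting the identity.} Write $\ell_k$ for the rooted path on $k$ vertices, so that the first term of $f_{m,n}$ is $\ell_m\ell_n$, and let $t^{(1)}_{i,j}$ and $t^{(2)}_{i,j}$ denote the trees in the first and second sums of $f_{m,n}$. The recursion for $F$ gives $F_{\ell_k}=(x+2y)^{k-1}y$, and, since $F_{B_+(g)}=R_yR_{x+2y}R_y^{-1}(F_g)$ (for $g\neq\I$) amounts to inserting the word $x+2y$ just before the last letter of $F_g$ (which is always a $y$), and each $t^{(1)}_{i,j}$, $t^{(2)}_{i,j}$ is an iterated grafting, one reads off that $F_{t^{(1)}_{i,j}}$ is obtained from $y\diamond F_{B_+(\ell_i\ell_j)}$ and $F_{t^{(2)}_{i,j}}$ from $y\diamond F_{\ell_i}\diamond F_{\ell_j}$ by performing this insertion once for each vertex of the corresponding top ladder. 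Because $F_{\ell_m\ell_n}=F_{\ell_m}\diamond F_{\ell_n}$, the claim $F_{f_{m,n}}=0$ is exactly the identity
\[
F_{\ell_m}\diamond F_{\ell_n}=\sum_{\substack{0\le i\le m-1\\ 0\le j\le n-1}}F_{t^{(1)}_{i,j}}-\sum_{\substack{0\le i\le m-1\\ 0\le j\le n-1\\(i,j)\neq(0,0)}}F_{t^{(2)}_{i,j}}.
\]

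\emph{Step 3: proving the identity.} I would prove this by a direct expansion, possibly packaged as an induction on $m+n$. The base case $\min(m,n)=1$ is immediate, since there $f_{m,n}$ already vanishes in $\calH$: the two sums reduce to degenerate (forest-valued) terms that cancel against one another except for a single copy of $\ell_1\ell_m$, leaving $\ell_m\ell_1-\ell_1\ell_m=0$ by commutativity of the forest product. For $m,n\ge2$ one expands $F_{\ell_m}\diamond F_{\ell_n}$ by iterating \eqref{di2} (using \eqref{di1} to absorb the letters $z$), which yields a sum of monomials indexed by the ways the two ladders can be interleaved and partially merged; regrouping those monomials according to the shape of the merged portion and repackaging each group via the grafting rule for $F$ recovers exactly $F_{t^{(1)}_{i,j}}$ with coefficient $+1$ and $F_{t^{(2)}_{i,j}}$ with coefficient $-1$, the merge that would correspond to $(i,j)=(0,0)$ being the unique one that cannot occur --- hence its omission from the second sum. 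The algebraic inputs --- Theorem~\ref{MT}, \eqref{di1}, \eqref{di2}, and the $B_+$-rule for $F$ --- are routine; the real work, and the main obstacle, is the combinatorial bookkeeping of this regrouping: checking that the fully expanded $\diamond$-product breaks up into precisely these families of trees with the stated multiplicities and signs.
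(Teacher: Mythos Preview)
Your Steps 1 and 2 are essentially the paper's reduction: the paper proves $\Ker\rho=\Ker\sigma$ (Lemmas~\ref{lem1}--\ref{lem2}) and then attacks $\sigma(f_{m,n})=F_{f_{m,n}}=0$ directly. Your verification of the base case $\min(m,n)=1$ is also correct: one checks that $t^{(1)}_{i-1,0}=t^{(2)}_{i,0}$ as forests, so the two sums telescope and leave $\ell_m\ell_1-\ell_1\ell_m=0$.

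The gap is Step~3. You describe a plan---expand $F_{\ell_m}\diamond F_{\ell_n}$ via~\eqref{di2}, regroup the resulting monomials, and recognise the pieces $F_{t^{(1)}_{i,j}}$ and $F_{t^{(2)}_{i,j}}$---but you do not carry it out, and you yourself flag the regrouping as ``the real work, and the main obstacle.'' That is precisely the substance of the theorem; without it, nothing has been proved beyond the easy reduction. The difficulty is genuine: since $F_{\ell_k}=(z+y)^{k-1}y$ is not a single $\di$-monomial $z^{k-1}y$, iterating~\eqref{di2} on $F_{\ell_m}\di F_{\ell_n}$ produces a doubly-indexed mess of interleavings whose reassembly into the \emph{non-ladder} trees $t^{(1)}_{i,j},t^{(2)}_{i,j}$ (which themselves have $F$-values that are again $\di$-products, not words) is far from a routine bookkeeping exercise.

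The paper avoids this entirely by a generating-function argument. Writing $R=R_yR_{z+y}R_y^{-1}$ so that $F_{\ell_m}=R^{m-1}(y)$, it first shows (Lemma~\ref{lem3}) a closed form for $\dfrac{1}{1-RX}(1)\di\dfrac{1}{1-RY}(1)$ in $\fH^1_\di[[X,Y]]$, and then proves a single functional identity (Proposition~\ref{theorem:a}) for this series; the identity is established by a finite computation using only~\eqref{di1},~\eqref{di2} and the elementary recursions $f(k+1)-(X+Y)f(k)+XYf(k-1)=0$. Extracting the coefficient of $X^mY^n$ then yields $\sigma(f_{m,n})=0$ for all $m,n$ simultaneously. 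This packaging is the missing idea in your proposal: it replaces the open-ended combinatorial regrouping by a short algebraic verification in a power-series ring.
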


Let $\sigma : \calH \to \fH^1_{\diamond}$ be $\sigma(f)=F_{f}$.
The map $\sigma$ is a surjective algebra homomorphism.  
Theorem \ref{main2} turns out to be equivalent to the following theorem. 
\begin{thm}\label{main3}
For any $m, n \geq 1$, we have $\sigma(f_{m, n})=0$. 
\end{thm}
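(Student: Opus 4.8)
The plan is to compute $\sigma(f_{m,n}) = F_{f_{m,n}}$ directly by translating each of the three sums defining $f_{m,n}$ into an explicit polynomial in $\fH^1_\di$, using the recursive definition of $F_f$ together with the key formulas \eqref{di1} and \eqref{di2}. First I would record the basic building blocks: if $l_k$ denotes the linear chain rooted tree of degree $k$ (so $l_1 = \begin{tikzpicture}[scale=0.2,baseline={([yshift=-.4ex]current bounding box.center)}]\fill (0,0) circle (3pt);\draw (0,0) circle (3pt);\end{tikzpicture}$), then an easy induction on the recursion $F_{B_+(f)} = R_y R_{x+2y} R_y^{-1}(F_f)$ gives $F_{l_k} = y z^{k-1}$ where $z = x+y$, and more generally applying $B_+$ some number of times to a forest $F$ sitting below produces $R_y R_{x+2y} R_y^{-1}$ applied repeatedly. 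So the first tree in the definition of $f_{m,n}$ contributes $F_{l_m}\di F_{l_n} = yz^{m-1}\di yz^{n-1}$, which by \eqref{di2} (with $v=w=1$, $k=m$, $l=n$) equals $2\,yz^{m+n-1}$... wait, more precisely $yz^{m-1}\di yz^{n-1} = yz^{n-1}\cdot z^{m-1}y$-type terms; I would expand it carefully as $(1\di yz^{n-1})z^{m-1}y + (yz^{m-1}\di 1)z^{n-1}y - (1\di 1)z^{m+n-1}y = yz^{m+n-2}\cdot zy$-corrections, ultimately a sum of three explicit monomials in $y,z$.

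Next I would handle the two correction sums. Each tree appearing there is built from a linear chain at the bottom to which a small "forked" forest is grafted; reading off $F$ for such a tree amounts to: take the forest hanging below the branch point, which is itself a product $l_a\cdot l_b$ of two chains (hence $F = yz^{a-1}\di yz^{b-1}$), then apply $R_y R_{x+2y} R_y^{-1}$ a controlled number of times corresponding to the stem above. Using $F_{l_a}\di F_{l_b} = yz^{a-1}\di yz^{b-1}$ expanded via \eqref{di2}, together with the identity $R_y R_{x+2y} R_y^{-1}(wz^{k-1}y) = wz^k y$ — which follows since $R_y^{-1}(wz^{k-1}y)=wz^{k-1}$, then right-multiply by $x+2y = z+y$... hmm, one must be slightly careful: $R_{x+2y}(wz^{k-1}) = wz^{k-1}(z+y) = wz^k + wz^{k-1}y$, and then $R_y$ gives $wz^k y + wz^{k-1}y^2$; so the operator does not simply raise $z$-degree, and I would instead use the cleaner consequence that $\widetilde{B_+(f)}$-type operators interact with $\di$ exactly as in Theorem \ref{MT}. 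The honest route is: by Theorem \ref{MT} and the definition of $f_{m,n}$, proving $\sigma(f_{m,n})=0$ is equivalent to proving the corresponding identity $\tilde f_{m,n}(xw)=0$ for all $w$, i.e. Theorem \ref{main2}; but since the excerpt asks for \ref{main3} in the $\fH^1_\di$ setting, I would set up each $F$-value as a $\di$-polynomial in the generators $yz^{k-1}$ and show the three sums telescope.

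The main obstacle, and the technical heart of the argument, is the bookkeeping of the telescoping cancellation between the positive tree-term (the first chain pair) and the two index-sum corrections. Concretely, after expanding everything via \eqref{di2}, one obtains a triple family of monomials indexed by $(i,j)$ (and by the three terms coming from each application of \eqref{di2}), and one must check that the "diagonal" contributions from the first sum cancel against the shifted contributions from the second sum, leaving only the bare term $yz^{m-1}\di yz^{n-1}$ which is exactly cancelled by the leading tree. I expect this to reduce to a finite identity among binomial-type coefficients — or more likely, a clean reindexing $i \mapsto i$, $j\mapsto j$ versus $i\mapsto i-1$ or $j\mapsto j-1$ that matches the $(i,j)\neq(0,0)$ restriction in the second sum with the full range in the first. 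Once the combinatorial identity \eqref{di2} is applied systematically and the boundary terms ($i=0$ or $j=0$) are tracked, the cancellation should be forced; verifying it in the two smallest cases $f_{2,2}$ and $f_{2,3}$ (already displayed in the excerpt) gives a useful consistency check and a template for the general induction on $m+n$.
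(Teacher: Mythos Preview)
Your plan has a genuine computational error at its foundation that propagates through the rest of the sketch. You claim $F_{l_k} = yz^{k-1}$ for the ladder tree $l_k$, but this is false: from $F_{l_1}=y$ and $F_{l_k}=R_yR_{x+2y}R_y^{-1}(F_{l_{k-1}})$ one gets $F_{l_2}=(z+y)y$, $F_{l_3}=(z+y)^2y$, and in general $F_{l_k}=(z+y)^{k-1}y$. (You effectively notice this yourself midway through when you compute $R_y R_{x+2y}R_y^{-1}(wz^{k-1}y)=wz^ky+wz^{k-1}y^2$ and remark that the operator ``does not simply raise $z$-degree.'') Consequently your attempted use of \eqref{di2} on ``$yz^{m-1}\di yz^{n-1}$'' is applied to the wrong polynomials, and indeed to polynomials not even of the shape $vz^{k-1}y$ that \eqref{di2} requires. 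The subsequent discussion of a telescoping cancellation indexed by $(i,j)$ is therefore built on the wrong building blocks, and in any case you never carry it out; ``the cancellation should be forced'' is not an argument, and the small cases $f_{2,2},f_{2,3}$, while useful sanity checks, do not substitute for one.

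The paper does not attempt a direct term-by-term cancellation. Instead it packages all $m,n$ simultaneously into a single generating-function identity in $\fH^1_\di[[X,Y]]$: setting $R=R_yR_{z+y}R_y^{-1}$, one first shows (Lemma~\ref{lem3}) that $\frac{1}{1-RX}(1)\di\frac{1}{1-RY}(1)=\sum_{\mathbf{k}}z_{\mathbf{k}}$ for suitable $z_{\mathbf{k}}$, then proves (Proposition~\ref{theorem:a}) a closed identity relating $(1-RX)(1-RY)$ applied to this product with $y\di R(\cdots)-R(y\di\cdots)$. The proof of that proposition is where the real work happens: it uses \eqref{di1}, \eqref{di2}, and the two scalar recursions $f(2)=(X+Y)^2-3XY$ and $f(k+1)=f(k)(X+Y)-f(k-1)XY$ to collapse several infinite sums. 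Extracting the coefficient of $X^mY^n$ then gives exactly $\sigma(f_{m,n})=0$. If you want to salvage a direct approach, you must start from the correct $F_{l_k}=(z+y)^{k-1}y$ and recognise that the resulting $\di$-expansions are genuinely binomial in $(z+y)$; the generating-function method is precisely the device that organises this complexity.
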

\section{Proof of Theorem \ref{main1}}\label{sec4}
\begin{lem}\label{lem1}
For any $f \in \calH$, $\tilde{f}=0$ if and only if $\tilde{f}(x)=0$.
\end{lem}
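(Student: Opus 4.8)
The implication $\tilde f=0\Rightarrow\tilde f(x)=0$ is trivial, so the plan is to prove the converse: $\tilde f(x)=0$ forces $\tilde f=0$. For this I would combine three inputs. First, the elementary properties of RTMs from (I)--(IV) and \cite{T}: notably that $\tilde f(x)$ always ends in $y$, together with the facts --- immediate from (I)--(IV) by routine inductions on degree --- that $\tilde g(1)=0$ and $\tilde g(y)=-\tilde g(x)$ for every $g$ in the augmentation ideal $\Aug\subset\calH$ (the span of the nonempty forests). Second, Theorem \ref{MT}, which at $w=1$ reads $\tilde f(x)=xF_f$ and in general reads $\tilde f(xw)=x(F_f\di w)$. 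Third, a commutation identity, established below, to the effect that every RTM commutes with left multiplication by $z=x+y$.

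First I would record what the hypothesis already gives. Since $v\mapsto xv$ is injective on $\fH$, the relation $\tilde f(x)=xF_f$ shows that $\tilde f(x)=0$ is equivalent to $F_f=0$, and then Theorem \ref{MT} gives $\tilde f(xw)=x(F_f\di w)=0$ for all $w\in\fH$; thus $\tilde f$ vanishes on every word beginning with $x$. Writing $f=c\,\I+g$ with $c\in\Q$ and $g\in\Aug$, we have $\tilde f(x)=c\,x+\tilde g(x)$ with $\tilde g(x)$ supported on words ending in $y$; comparing the coefficient of the length-one word $x$ on the two sides of $\tilde f(x)=0$ forces $c=0$. So in fact $f\in\Aug$, and in particular $\tilde f(1)=0$ and $\tilde f(y)=-\tilde f(x)=0$.

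Next I would prove the identity
\[
\tilde f(zw)=z\,\tilde f(w)\qquad(f\in\calH,\ w\in\fH)
\]
by induction on the word length of $w$. For $w=1$, writing $f=c\,\I+g$ as above, both sides equal $c\,z$, using $\tilde g(z)=\tilde g(x)+\tilde g(y)=0$ and $\tilde g(1)=0$. For $w=w'v$ with $v\in\{x,y\}$, property (IV) gives $\tilde f(zw'v)=\sum_{(f)}\tilde{f_1}(zw')\,\tilde{f_2}(v)$; the induction hypothesis replaces each $\tilde{f_1}(zw')$ by $z\,\tilde{f_1}(w')$, and pulling $z$ out of the concatenation product turns the right-hand side into $z\,\tilde f(w'v)$. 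I expect this to be the real crux: it is exactly what lets one transfer control from words beginning with $x$ to words beginning with $y$, via $yw=zw-xw$.

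Finally I would conclude by induction on the length of a word $w$ that $\tilde f(w)=0$. The base case $w=1$ is $\tilde f(1)=0$. If $w=xu$, then $\tilde f(w)=x(F_f\di u)=0$ by the first step. If $w=yu$, then $\tilde f(w)=\tilde f(zu)-\tilde f(xu)=z\,\tilde f(u)-0$ by the commutation identity, and $\tilde f(u)=0$ by the induction hypothesis since $u$ is shorter; hence $\tilde f(w)=0$. Thus $\tilde f=0$, as wanted. The only genuine difficulty is isolating and proving the commutation identity $\tilde f(zw)=z\,\tilde f(w)$; granting it and Theorem \ref{MT}, the rest is bookkeeping.
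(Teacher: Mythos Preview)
Your argument is correct. Both your proof and the paper's hinge on the same $z$-commutation identity $\tilde f(zw)=z\,\tilde f(w)$ (equivalently $\tilde f(yw)=z\,\tilde f(w)-\tilde f(xw)$) to propagate vanishing from words beginning with $x$ to all of $\fH$; the paper quotes this identity from \cite[Theorem~1.2]{T}, whereas you supply the short induction via property~(IV). The substantive difference is how the initial vanishing is obtained: you invoke Theorem~\ref{MT} to get $F_f=0$ and hence $\tilde f(xw)=x(F_f\di w)=0$ for every $w$, while the paper instead uses the surjectivity result of \cite{BT2} (every $w\in x\fH y$ is $\tilde g(x)$ for some $g\in\calH$) together with the commutativity of RTMs to write $\tilde f(w)=\tilde f(\tilde g(x))=\tilde g(\tilde f(x))=0$. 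Your route is more self-contained given what is already available in the paper (Theorem~\ref{MT} is stated in \S\ref{sec2}) and delivers vanishing on the larger set $x\fH$ in one stroke; the paper's route avoids the $\di$-formalism but costs the external appeal to \cite{BT2}.
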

\begin{proof}
It is obvious that $\tilde{f}(x)=0$ if $\tilde{f}=0$. 
Suppose $\tilde{f}(x)=0$. By definition, we see that $\tilde{f}(x+y)=0$, and hence $\tilde{f}(y)=0$. By \cite{BT2}, for any $w \in x\fH y$
there exists $g \in \calH$ such that $\tilde{g}(x)=w$. 
Hence
$$\tilde{f}(w)=\tilde{f}(\tilde{g}(x))=\tilde{g}(\tilde{f}(x))=\tilde{g}(0)=0. $$
Combining this with
\begin{align*}
\tilde{f}(yw) &= (x+y)\tilde{f}(w)-\tilde{f}(xw), \\
\tilde{f}(wx) &= \tilde{f}(w) (x+y)-\tilde{f}(wy) 
\end{align*}
($w \in \fH$),  which come from \cite[Theorem 1.2]{T}, 
we conclude $\tilde{f}(w) =0$ for all $w \in \fH$ inductively. It is easy to see that $\tilde{f}(\Q)=\{0\}$ if $f\in\calH$ has no terms of degree $0$. 
\end{proof}

\begin{lem}\label{lem2}
$\Ker \rho = \Ker \sigma$. 
\end{lem}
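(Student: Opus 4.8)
The plan is to reduce the whole statement to the single identity $\tilde f(x)=xF_f$, which is the special case $w=1$ of Theorem~\ref{MT} (using $F_f\di 1=F_f$), and then to feed the two implications through this identity, invoking Lemma~\ref{lem1} for the harder direction. Note first that Theorem~\ref{MT} and the recursion defining $F_f$ are stated for forests and extended $\Q$-linearly, and $\rho$ is $\Q$-linear as well, so the identity $\tilde f(x)=xF_f$ holds for arbitrary $f\in\calH$, not just $f\in\calF$; I would record this at the outset.

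First I would show $\Ker\rho\subseteq\Ker\sigma$. Suppose $f\in\Ker\rho$, i.e.\ $\tilde f=0$; in particular $\tilde f(x)=0$. Applying Theorem~\ref{MT} with $w=1$ gives $0=\tilde f(x)=x(F_f\di 1)=xF_f$. Since left concatenation by $x$ is injective on $\fH$ (the monomials $xm$ are linearly independent), this forces $F_f=0$, that is, $f\in\Ker\sigma$.

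Conversely, to show $\Ker\sigma\subseteq\Ker\rho$, suppose $F_f=0$. By $\Q$-bilinearity of $\di$ we have $0\di w=0$ for every $w\in\fH$, so Theorem~\ref{MT} yields $\tilde f(xw)=x(F_f\di w)=x(0\di w)=0$ for all $w\in\fH$; taking $w=1$ gives $\tilde f(x)=0$. Lemma~\ref{lem1} then gives $\tilde f=0$, i.e.\ $f\in\Ker\rho$. I do not expect a genuine obstacle: the lemma is essentially a formal consequence of Theorem~\ref{MT} (to pass between $\tilde f$ and $F_f$) together with Lemma~\ref{lem1} (to upgrade vanishing on $x$ to vanishing everywhere); the only minor point needing a word of justification is the linear extension from forests to all of $\calH$ mentioned above.
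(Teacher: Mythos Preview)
Your proposal is correct and follows essentially the same route as the paper: use Theorem~\ref{MT} with $w=1$ to obtain $\tilde f(x)=xF_f$, deduce that $\tilde f(x)=0$ iff $F_f=0$ via injectivity of left multiplication by $x$, and then invoke Lemma~\ref{lem1}. The paper compresses both inclusions into a single biconditional, whereas you spell them out separately (and use the full Theorem~\ref{MT} for all $w$ in the reverse direction before specializing to $w=1$), but the argument is the same.
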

\begin{proof}
By \cite{MT}, $\tilde{f}(x)=xF_{f}$ for $f \in \calH$, 
and hence $\tilde{f}(x)=0$ if and only if $F_f=0$. 
Therefore we have the assertion by Lemma \ref{lem1}.
\end{proof}
\begin{proof}[Proof of Theorem \ref{main1}]
By Lemma \ref{lem2}, we have
$$\widetilde{\calH} = \Imm \rho \simeq \calH/\Ker \rho =  \calH/\Ker \sigma \simeq \Imm \sigma = \fH^1_{\diamond}.$$
Define the vector $f_d \in \mathcal{U}_{d}^{2^{d-1}}$ by $f_1 = (\,\begin{tikzpicture}[scale=0.25,baseline={([yshift=-.5ex]current bounding box.center)}]
\def\cz{5}
\def\wi{0.5}

\newcommand{\ci}[1]{	
	\fill[black] (#1) circle (\cz pt);
	\draw (#1) circle (\cz pt);
}

\coordinate (R) at (0,0);

\ci{R}

\end{tikzpicture}
$$f_d = 
\left(
\begin{array}{c}
    B_+(f_{d-1}) \\ \hdashline 
    \begin{tikzpicture}[scale=0.25,baseline={([yshift=-.5ex]current bounding box.center)}]
\def\cz{5}
\def\wi{0.5}

\newcommand{\ci}[1]{	
	\fill[black] (#1) circle (\cz pt);
	\draw (#1) circle (\cz pt);
}

\coordinate (R) at (0,0);

\ci{R}

\end{tikzpicture}
\end{array}
\right),
$$
and set 
$$F_d\coloneqq (F_{f_{d,j}})_{1\leq j\leq 2^{d-1}} \in \left( \fH^1_{\diamond} \right)^{2^{d-1}} \text{for}\  f_d = (f_{d,j})_{1\leq j\leq 2^{d-1}} \in \mathcal{U}_{d}^{2^{d-1}}. $$
Then,  by \cite[Lemma 3.6]{BT2} and Theorem \ref{MT}, we have 
$$F_d \equiv B_d w_{d} \mod 2, $$
where we denote for $d \geq 1$ by $w_d \in \left( \fH^1_{\diamond} \right)_d^{2^{d-1}}$ 
the vector of all $2^{d-1}$ words in $\fH^1_{\diamond}$ of degree $d$, ordered in lexicographical order ($x < y$) 
from the top to the bottom, and by $B_d \in M_{2^{d-1}}(\Z)$ same matrices defined in \cite{BT2}. 
Since the matrices $B_d$ modulo $2$ are invertible  because of \cite[Lemma 3.5]{BT2}, we see that
$\sigma\left(\bigsqcup_{d \geq0} \mathcal{U}_{d}\right)$ forms a basis of $\fH^1_{\diamond}$. 
Hence we find $\bigsqcup_{d \geq0} \mathcal{U}_{d}$ forms a basis of 
$\calH/\Ker \rho =  \calH/\Ker \sigma$, and the theorem. 
\end{proof}
\section{Proof of Theorem \ref{main3}}\label{sec5}
Theorem \ref{main2} gives a family of relations for RTMs. 
Because of Lemma \ref{lem2}, it is sufficient to prove Theorem \ref{main3} instead of Theorem \ref{main2}. To do this,  we prove the following lemma and proposition. 
For $k \geq 1, \mathbf{k}=(k_1, \ldots, k_r) \in \Z_{\geq 1}^r$ and formal parameters $X$ and $Y$, put
\begin{align*}
 f(k) & =X^{k}+Y^{k}-\sum_{n=1}^{k-1}X^{n}Y^{k-n},\\
 z_{k} & =f(k)z^{k-1}y,\\
 z_{\mathbf{k}} 
  & =z_{k_{1},\dots,k_{r}}=z_{k_{1}}\cdots z_{k_{r}},\quad z_{\emptyset}=1.
\end{align*}

\begin{lem}\label{lem3}
In $\fH^1_{\di}[[X, Y]]$, We have
\begin{align*}
 \frac{1}{1-RX}(1)\diamond\frac{1}{1-RY}(1)
 =\sum_{\mathbf{k}}z_{\mathbf{k}}, 
\end{align*}
where the sum runs over all indices with positive integer components and $R\coloneqq R_yR_{z+y}R^{-1}_y$.
\end{lem}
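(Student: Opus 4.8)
The plan is to prove both sides of the identity satisfy the same recursion in the formal variables $X$ and $Y$, then conclude by uniqueness. Write $A(X)\coloneqq \frac{1}{1-RX}(1) = \sum_{k\geq 0} R^k(1)\, X^k$, where $R = R_yR_{z+y}R_y^{-1}$; note $R^k(1) = F_{t_k}$ for $t_k$ the ladder tree of degree $k+1$, so $A(X)$ is the generating series of the single-chain polynomials. The first step is to compute $R^k(1)$ explicitly: I expect $R^k(1) = \left(\sum_{n=0}^{k} x^n z^{k-n} - \text{(correction)}\right)y$ in a form matching $f(k)z^{k-1}y$ after replacing the formal $X,Y$ by the noncommuting letters — more precisely I would show $R^k(1) = z_k\big|_{X=x,\,Y=z}$ is not quite right, so the cleaner route is to keep $A(X)$ abstract and extract the recursion $A(X) = 1 + X\,R(A(X))$, i.e. the defining functional equation $A(X) - 1 = X R_y R_{z+y} R_y^{-1} A(X)$.

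The core step is then to expand the left-hand side $A(X)\diamond A(Y)$ using the $\di$-recursions for words ending in $y$, together with the key identity \eqref{di2}. Since every nonconstant term of $A(X)$ and $A(Y)$ lies in $\fH y$, I can write $A(X) = 1 + \sum_{k\geq 1}(u_k)\,z^{k-1}y\,X^k$ with $u_k = f(k)$ as a polynomial in $z$ and $x$ after the substitution, and similarly for $A(Y)$; then \eqref{di2} gives, for the product of a typical term $v z^{k-1}y$ of $A(X)$ with a typical term $w z^{l-1}y$ of $A(Y)$,
\[
(v z^{k-1}y)\di(w z^{l-1}y) = (v \di w z^{l-1}y)z^{k-1}y + (v z^{k-1}y \di w)z^{l-1}y - (v\di w)z^{k+l-1}y.
\]
Summing over all $k,l\geq 1$ and using $z_\emptyset = 1$, the three terms on the right reassemble into: (first term) $\big(A(X)\di A(Y)\big)$ with its leftmost $z_k$-block stripped, re-prepended — giving a term of the form $\big(\text{shorter product}\big)z_{k_1}$; (second term) symmetrically; (third term) the "merged" contribution producing a single $z_{k+l}$-block. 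This is exactly the quasi-shuffle-type recursion satisfied by $\sum_{\mathbf k} z_{\mathbf k}$: indeed $\sum_{\mathbf k}z_{\mathbf k} = 1 + \big(\sum_{\mathbf k}z_{\mathbf k}\big)\cdot(\text{appending one }z_k)$ in a suitable sense, and the bilinear $\di$-expansion reproduces precisely the stuffle recursion for indexed series. I would make this precise by induction on total degree: assuming the identity in degrees $<N$, the degree-$N$ part of $A(X)\di A(Y)$ is determined by the above three-term formula from lower-degree data, which by the induction hypothesis equals the corresponding part of $\sum_{\mathbf k}z_{\mathbf k}$, itself governed by the same three-term split (the harmonic/stuffle product of $z_{k_1}\cdots$ with $z_{l_1}\cdots$).

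The main obstacle I anticipate is bookkeeping the substitution $X \mapsto x$, $Y \mapsto z = x+y$ (or whichever identification makes $R^k(1)$ match $z_k$) and checking that the letter $z$ appearing in $z_k = f(k)z^{k-1}y$ interacts correctly with \eqref{di1}–\eqref{di2}: the identity \eqref{di1} says $z$ "passes through" $\di$ freely, which is exactly what lets the $z^{k-1}$ and $z^{l-1}$ tails survive the product and only the boundary $y$'s drive the recursion — but one must verify that the polynomial prefactor $f(k)$, which involves $X^nY^{k-n}$, is handled by treating $X,Y$ as genuine commuting formal parameters outside the word, independent of the $\di$-structure. Once that separation is clean, the induction closes routinely. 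An alternative, possibly shorter, route: recognize that under $\sigma$ the left side is $F_{\,\text{(ladder)}}\di F_{\,\text{(ladder)}}$, invoke Theorem \ref{MT} to rewrite $\tilde f(xw) = x(F_f\di w)$, and pull the identity back to a statement about $R^k$ acting on $\fH$, where the functional equation $A(X) = 1 + XR(A(X))$ makes the generating-function manipulation transparent; I would present whichever of the two is cleaner after the substitution issue is resolved.
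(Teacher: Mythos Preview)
Your high-level strategy---expand $A(X)\diamond A(Y)$ via the three-term rule \eqref{di2} and match against the recursion governing $\sum_{\mathbf k}z_{\mathbf k}$---is exactly what the paper does. But the execution has two real gaps.

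First, there is no substitution ``$X\mapsto x$, $Y\mapsto z$'': the variables $X,Y$ are commuting formal parameters living \emph{outside} $\fH$, and $f(k)=X^k+Y^k-\sum_{n=1}^{k-1}X^nY^{k-n}$ is a scalar in $\Q[[X,Y]]$ multiplying the word $z^{k-1}y\in\fH$. So $z_{\mathbf k}\in\fH^1[[X,Y]]$ with $\fH$-part $z^{k_1-1}y\cdots z^{k_r-1}y$; the ``obstacle'' you anticipate is based on a misreading and is not there. Second, and more seriously, your claimed form $A(X)=1+\sum_{k\ge1}u_k\,z^{k-1}y\,X^k$ with $u_k=f(k)$ is false: one has $R^m(1)=(z+y)^{m-1}y$ for $m\ge1$, so $A(X)$ is \emph{not} a sum of single $z^{k-1}y$-blocks, and $f(k)$ is not a coefficient in $A(X)$ at all. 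The missing step---and the paper's key opening move---is to expand $(z+y)^{m-1}$ and resum to obtain
\[
\frac{1}{1-RX}(1)=1+\frac{yX}{1-(z+y)X}=\sum_{n\ge0}\Bigl(\sum_{k\ge1}z^{k-1}y\,X^k\Bigr)^{n}=\sum_{\mathbf k}w_{\mathbf k}\,X^{\wt(\mathbf k)},
\]
where $w_{\mathbf k}=z^{k_1-1}y\cdots z^{k_r-1}y$. Only with this product-of-blocks form does \eqref{di2} apply cleanly: peeling the \emph{rightmost} block off each factor and regrouping, one finds that $L\coloneqq A(X)\diamond A(Y)$ satisfies $L=1+L\cdot\sum_{k\ge1}f(k)\,z^{k-1}y$, whence $L=\bigl(1-\sum_{k\ge1}f(k)z^{k-1}y\bigr)^{-1}=\sum_{\mathbf k}z_{\mathbf k}$ by a geometric series. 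The polynomials $f(k)$ in $X,Y$ arise from combining the ``$X^k$'', ``$Y^l$'', and ``$-X^kY^l$'' contributions of the three $\diamond$-terms, not from any identification of $X,Y$ with letters of $\fH$. Your induction-on-degree sketch could be salvaged once you have the correct block expansion of $A(X)$, but the direct functional-equation argument is shorter.
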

\begin{proof}
Let $w_{k}=z^{k-1}y$, $w_{\mathbf{k}} =w_{k_{1},\dots,k_{r}}=w_{k_{1}}\cdots w_{k_{r}}$, and $w_{\emptyset}=1$.
Note that
\begin{align*}
\frac{1}{1-RX}(1)
=1+\sum_{m \geq 1} R^m(1)X^m
=1+\frac{1}{1-(z+y)X}yX
=\sum_{\mathbf{k}}w_{\mathbf{k}} X^{\text{wt}(\mathbf{k})}
=\sum_{n \geq 0}\left(\sum_{k \geq 1} z^{k-1}y X^k \right)^n.
\end{align*}
%
Then, using \eqref{di2}, we calculate 
\begin{align*}
L  \coloneqq &  \frac{1}{1-RX}(1)\diamond\frac{1}{1-RY}(1)\\
  = & \sum_{n \geq 0}\left(\sum_{k \geq 1} z^{k-1}y X^k \right)^{n} 
              \diamond 
       \sum_{m \geq 0}\left(\sum_{j \geq 1} z^{j-1}y Y^j \right)^{m}\\
  = & 1+ \sum_{n \geq 1}\left(\sum_{k \geq 1} z^{k-1}y X^k \right)^n 
      + \sum_{m \geq 1}\left(\sum_{j \geq 1} z^{j-1}y Y^j \right)^m\\
      & +\left\{ \sum_{n \geq 0}\left(\sum_{k \geq 1} z^{k-1}y X^k \right)^{n} 
               \diamond 
               \sum_{m \geq 1}\left(\sum_{j \geq 1} z^{j-1}y Y^j \right)^m \right\}
        \sum_{k\geq 1} z^{k-1}yX^k\\
      & +\left\{ \sum_{n \geq 1}\left(\sum_{k \geq 1} z^{k-1}y X^k \right)^{n} 
               \diamond 
               \sum_{m \geq 0}\left(\sum_{j \geq 1} z^{j-1}y Y^j \right)^{m} \right\} 
        \sum_{j\geq 1} z^{j-1}yY^j\\
     & -\left\{ \sum_{n \geq 0}\left(\sum_{k \geq 1} z^{k-1}y X^k \right)^{n} 
              \diamond 
              \sum_{m \geq 0}\left(\sum_{j \geq 1} z^{j-1}y Y^j \right)^{m} \right\} 
         \sum_{k, j \geq 1} z^{k+j-1}yX^kY^j\\
= &  1+ \sum_{n \geq 1}\left(\sum_{k \geq 1} z^{k-1}y X^k \right)^n 
      + \sum_{m \geq 1}\left(\sum_{j \geq 1} z^{j-1}y Y^j \right)^m\\
      & + \left\{L- \sum_{n \geq 0}\left(\sum_{k \geq 1} z^{k-1}y X^k \right)^n\right\}\sum_{k \geq 1} z^{k-1}y X^k 
        + \left\{L- \sum_{m \geq 0}\left(\sum_{j \geq 1} z^{j-1}y Y^j \right)^m\right\}\sum_{j \geq 1} z^{j-1}y Y^j\\
      & - L \sum_{N \geq 1}\sum_{k=1}^{N-1} z^{N-1}yX^kY^{N-k}\\
= & 1     
      + L\sum_{k \geq 1} z^{k-1}y X^k 
        + L\sum_{j \geq 1} z^{j-1}y Y^j
         - L \sum_{N \geq 1} z^{N-1}y \sum_{k=1}^{N-1}X^kY^{N-k}\\
   = & 1     
      + L \sum_{k \geq 1}f(k) z^{k-1}y.     
\end{align*}
Therefore we conclude 
$$L=\left(1- \sum_{k \geq 1}f(k) z^{k-1}y\right)^{-1} =\sum_{j \geq 0} \left(\sum_{k \geq 1}f(k) z^{k-1}y\right)^j=\sum_{\mathbf{k}}z_{\mathbf{k}}.
$$
\end{proof}
\begin{prop} \label{theorem:a}
In $\fH^1_\di[[X, Y]]$, the identity
\begin{align*}
 & (1-RX)(1-RY)\left(\frac{1}{1-RX}(1)\diamond\frac{1}{1-RY}(1)\right)\\
 & =1+XY\Biggl(y\diamond R\left(\frac{1}{1-RX}(1)\diamond\frac{1}{1-RY}(1)\right)-R\left(y\diamond\frac{1}{1-RX}(1)\diamond\frac{1}{1-RY}(1)\right)\Biggr)
\end{align*}
holds. 
\end{prop}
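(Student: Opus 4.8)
The plan is to verify the identity by a direct computation resting on three facts: (i) the functional equation $L=1+L\zeta$, where $L\coloneqq\frac{1}{1-RX}(1)\diamond\frac{1}{1-RY}(1)$ and $\zeta\coloneqq\sum_{k\ge1}f(k)z^{k-1}y$ (juxtaposition denoting concatenation), which appears in the course of the proof of Lemma~\ref{lem3}; (ii) the identities $R(wy)=w(z+y)y$ for $w\in\fH$ and $R(1)=y$, read off from $R=R_yR_{z+y}R_y^{-1}$ and the expansion used in that same proof; (iii) the relation \eqref{di2}. Since $X$ and $Y$ are central, the left-hand side of the Proposition equals $L-(X+Y)R(L)+XYR^2(L)$, so everything comes down to computing $R(L)$, $R^2(L)$, $y\diamond R(L)$ and $R(y\diamond L)$ explicitly.

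First I would apply $R$ to $L=1+L\zeta$ by means of (ii), re-absorbing one family of terms via (i) itself, to get $R(L)=Ly+Lz\zeta$, and then iterate once more to get $R^2(L)=Lzy+Ly^2+Lz^2\zeta+(Lz\zeta)y$. Next, expanding $y\diamond(Lz^{k-1}y)$ by \eqref{di2} and summing over $k$ yields a second functional equation, $y\diamond L=Ly+(y\diamond L)\zeta-Lz\zeta$; applying $R$ to it (again by (ii)) gives $R(y\diamond L)=Lzy+(y\diamond L)y+(y\diamond L)z\zeta-Lz^2\zeta$, while expanding $y\diamond(Ly)$ and $y\diamond(Lz^ky)$ by \eqref{di2} and substituting $R(L)=Ly+Lz\zeta$ gives $y\diamond R(L)=Ly^2+(y\diamond L)y-Lzy+(Lz\zeta)y+(y\diamond L)z\zeta-Lz^2\zeta$.

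Subtracting, the terms $(y\diamond L)y$, $(y\diamond L)z\zeta$ and $Lz^2\zeta$ cancel, leaving $y\diamond R(L)-R(y\diamond L)=Ly^2-2Lzy+(Lz\zeta)y$. Plugging this together with $R(L)$ and $R^2(L)$ into the two sides of the Proposition, the terms $XYLy^2$ and $XY(Lz\zeta)y$ drop out; using $L-1=L\zeta$ and then cancelling the unit $L$ (which equals $(1-\zeta)^{-1}$ in the concatenation algebra), the statement collapses to the purely formal identity
\[
\zeta=(X+Y)y+(X+Y)z\zeta-3XYzy-XYz^2\zeta .
\]
Comparing coefficients of $z^{m}y$ for $m\ge0$, this amounts exactly to $f(1)=X+Y$, $f(2)=X^2-XY+Y^2$, and $f(k)=(X+Y)f(k-1)-XYf(k-2)$ for $k\ge3$; the last recursion holds because both $X^k+Y^k$ and $\sum_{n=1}^{k-1}X^nY^{k-n}$ satisfy the Newton-type recursion associated to $e_1=X+Y$ and $e_2=XY$.

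I expect the delicate part to be the second step: extracting the closed forms for $R(L)$, $R^2(L)$ and — above all — the functional equation for $y\diamond L$ from \eqref{di2} takes care, because $R=R_yR_{z+y}R_y^{-1}$ does not commute with $\diamond$ (only its right-multiplication-by-$z$ component does, by \eqref{di1}), and the whole content of the Proposition is that the resulting obstruction $y\diamond R(L)-R(y\diamond L)$ collapses to the simple expression $Ly^2-2Lzy+(Lz\zeta)y$. Everything afterwards is bookkeeping together with the symmetric-function recursion above.
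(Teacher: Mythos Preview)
Your argument is correct. The computations of $R(L)$, $R^{2}(L)$, $y\diamond L$, $R(y\diamond L)$ and $y\diamond R(L)$ all check out once one uses $R(wy)=w(z+y)y$, $R(1)=y$, the functional equation $L=1+L\zeta$, and \eqref{di2}; the cancellation you claim in $y\diamond R(L)-R(y\diamond L)$ does happen, and the final reduction to $\zeta=(X+Y)y+(X+Y)z\zeta-3XYzy-XYz^{2}\zeta$ is exactly the recursion $f(k+1)=(X+Y)f(k)-XYf(k-1)$ together with the values $f(1)=X+Y$ and $f(2)=X^{2}-XY+Y^{2}$, i.e.\ the paper's \eqref{Seq1}--\eqref{Seq2}. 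The cancellation of $L$ at the end is legitimate because $L$ has constant term $1$ in $\fH[[X,Y]]$ and is therefore a unit for concatenation.

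The paper's proof uses the same ingredients but organises them differently: it expands $L=\sum_{\mathbf{k}}z_{\mathbf{k}}$ via Lemma~\ref{lem3} and then computes, for each index $\mathbf{k}$ separately, the quantity $(y\diamond R(z_{\mathbf{k}})-R(y\diamond z_{\mathbf{k}}))XY-(1-RX)(1-RY)(z_{\mathbf{k}})$, obtaining the closed form \eqref{eq3.4}; summing over all $\mathbf{k}$ and telescoping via \eqref{Seq1}--\eqref{Seq2} then yields zero. Your approach bypasses the index-by-index bookkeeping by exploiting the functional equation $L=1+L\zeta$ (which is the intermediate step of Lemma~\ref{lem3}) to handle all terms at once; this is more compact and makes the role of the recursion for $f(k)$ transparent from the outset. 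The paper's route, on the other hand, isolates the per-index contribution \eqref{eq3.4}, which is a bit more explicit about where the obstruction $y\diamond R-R(y\diamond\cdot)$ lives on each basis element.
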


\begin{proof}
If $\mathbf{k}=\emptyset$, 
\begin{align}\label{eq3.5}
 \left( y\diamond R(z_{\mathbf{k}})-R\left(y\diamond z_{\mathbf{k}}\right)\right)XY-(1-RX)(1-RY)(z_{\mathbf{k}})
  = -3zyXY-1+f(1)y. 
\end{align}
Now suppose $\mathbf{k}=(k_1, \ldots, k_r) \neq \emptyset$. 
Since
\begin{align}
 y\diamond wy=wy^{2}-wzy+\left(y\diamond w\right)y \label{eq3.2},
\end{align}
we have
$$y\diamond R(z_{\mathbf{k}}) 
  =f(k_{r})\left(z_{k_{1},\dots,k_{r-1}}z^{k_{r}-1}(z+y)y^{2}-z_{k_{1},\dots,k_{r-1}}z^{k_{r}-1}(z+y)zy+\left(y\diamond z_{k_{1},\dots,k_{r-1}}z^{k_{r}-1}(z+y)\right)y\right) $$
and 
$$ R\left(y\diamond z_{\mathbf{k}}\right) 
  =f(k_{r})\left(z_{k_{1},\dots,k_{r-1}}z^{k_{r}-1}y(z+y)y-z_{k_{1},\dots,k_{r-1}}z^{k_{r}}(z+y)y+\left(y\diamond z_{k_{1},\dots,k_{r-1}}\right)z^{k_{r}-1}(z+y)y\right).
$$
Because of \eqref{di1} and \eqref{eq3.2}, we have 
\begin{align*}
& y\diamond z_{k_{1},\dots,k_{r-1}}z^{k_{r}-1}(z+y) - \left( y\diamond z_{k_{1},\dots,k_{r-1}}\right)z^{k_{r}-1}(z+y) \\
&= y \diamond z_{k_{1},\dots,k_{r-1}}z^{k_{r}-1}y - (y \diamond z_{k_{1},\dots,k_{r-1}}z^{k_{r}-1}) y\\
&= z_{k_{1},\dots,k_{r-1}}z^{k_{r}-1}y^2-z_{k_{1},\dots,k_{r-1}}z^{k_{r}-1}zy,
\end{align*}
and hence we have
$$y \diamond R(z_{\mathbf{k}}) - R\left(y\diamond z_{\mathbf{k}}\right)
 =f(k_{r})z_{k_{1},\dots,k_{r-1}}z^{k_{r}-1}\left((z+y)y-2yz\right)y.
$$
Taking
\begin{align*} 
  (1-RX)(1-RY)(z_{\mathbf{k}}) & =(1-RX)(1-RY)\left(z_{k_{1},\dots,k_{r-1}}f(k_{r})z^{k_{r}-1}y\right)\\
  & =f(k_{r})z_{k_{1},\dots,k_{r-1}}z^{k_{r}-1}\left(1-f(1)(z+y)+(z+y)^{2}XY\right)y 
\end{align*}
into account, 
we have
\begin{align}\label{eq3.4}
&\left( y\diamond R(z_{\mathbf{k}})-R\left(y\diamond z_{\mathbf{k}}\right)\right)XY-(1-RX)(1-RY)(z_{\mathbf{k}})\nonumber\\
&=f(k_{r})z_{k_{1},\dots,k_{r-1}}z^{k_{r}-1}\left\{ -1+f(1)(z+y)-(z+3y)zXY \right\}y.
\end{align}
By the way, according to  Lemma \ref{lem3}, the identity we should show is 
\begin{align}\label{eq3.4.1}
1 + \sum_{\mathbf{k}}\left\{ (y\diamond R(z_{\mathbf{k}})-R\left(y\diamond z_{\mathbf{k}}\right))XY-(1-RX)(1-RY)(z_{\mathbf{k}})\right\}
\end{align}
is zero. 
By \eqref{eq3.5} and \eqref{eq3.4}, we have
\begin{align}\label{eq3.7}
\eqref{eq3.4.1}
 =& -3zyXY+f(1)y+\sum_{r \geq 1} \sum_{k_1, \ldots k_r \geq 1}f(k_{r})z_{k_{1},\dots,k_{r-1}}z^{k_{r}-1}\left\{ -1+f(1)(z+y)-(z+3y)zXY \right\}y \nonumber\\
  =& -3zyXY
 -\sum_{k \geq 2}f(k)z^{k-1}y + \sum_{k \geq 1}f(k)z^{k-1}\left\{ f(1)(z+y)-(z+3y)zXY \right\}y\nonumber\\
 &+\sum_{r \geq 2} \sum_{k_1, \ldots k_r \geq 1}f(k_{r})z_{k_{1},\dots,k_{r-1}}z^{k_{r}-1}\left\{ -1+f(1)(z+y)-(z+3y)zXY \right\}y\nonumber\\
  =& -3zyXY -\sum_{k \geq 2}f(k)z^{k-1}y + \sum_{k \geq 1}f(k)\left\{ z^{k}(X+Y)-z^{k+1}XY \right\}y\nonumber\\
  & + \sum_{k \geq 1}f(k)z^{k-1}\left\{ y(X+Y)-3yzXY \right\}y\nonumber\\
 &+\sum_{r \geq 2} \sum_{k_1, \ldots k_r \geq 1}f(k_{r})z_{k_{1},\dots,k_{r-1}}z^{k_{r}-1}\left\{ -1+f(1)(z+y)-(z+3y)zXY \right\}y\nonumber\\
   = & \sum_{k \geq 1}f(k)z^{k-1}\left\{ y(X+Y)-3yzXY \right\}y\nonumber\\
 &+\sum_{r \geq 2} \sum_{k_1, \ldots k_r \geq 1}f(k_{r})z_{k_{1},\dots,k_{r-1}}z^{k_{r}-1}\left\{ -1+f(1)(z+y)-(z+3y)zXY \right\}y.
\end{align}
The last equality follows from 
\begin{align*}
 & -3zyXY-\sum_{k \geq 2}f(k)z^{k-1}y+\sum_{k \geq 1}f(k)\left\{ z^{k}(X+Y)-z^{k+1}XY\right\}y\\
 & =-3zyXY-\sum_{k \geq 1}\left\{ f(k+1)z^{k}y-f(k) z^{k}(X+Y)y \right\}
  -\sum_{k \geq 2}f(k-1)z^{k}XYy\\
 & = \left\{ -3XY-f(2)+(X+Y)^2 \right\}zy-\sum_{k \geq 2}\left\{ f(k+1)-f(k)(X+Y)+f(k-1)XY\right\}z^{k}y\\
 & =0,
\end{align*}
which is due to 
\begin{align}\label{Seq1}
f(2)-\left(X+Y\right)^{2}+3XY =0
\end{align}
and
\begin{align}\label{Seq2}
f(k+1)-f(k)\left(X+Y\right)+f(k-1)XY  =0 
\end{align}
for $k \geq 2 $. Then, 
\begin{align}\label{eq3.8}
\eqref{eq3.7} 
=& -\sum_{r \geq 2} \sum_{k_1, \ldots k_r \geq 1}f(k_{r})z_{k_{1},\dots,k_{r-1}}z^{k_{r}-1}y\nonumber\\
& +\sum_{r \geq 1} \sum_{k_1, \ldots k_r \geq 1}f(k_{r})z_{k_{1},\dots,k_{r-1}}z^{k_{r}-1}\left\{ y(X+Y)-3yzXY \right\}y\nonumber\\
& +\sum_{r \geq 2} \sum_{k_1, \ldots k_r \geq 1}f(k_{r})z_{k_{1},\dots,k_{r-1}}z^{k_{r}-1}\left\{ z(X+Y) - z^2XY \right\}y\nonumber\\
=
& \sum_{r \geq 1} \sum_{k_1, \ldots k_r \geq 1}f(k_{r})z_{k_{1},\dots,k_{r-1}}z^{k_{r}-1}\left\{ y(X+Y)-3yzXY \right\}y\nonumber\\
& +\sum_{r \geq 2} \sum_{k_1, \ldots k_{r-1} \geq 1}z_{k_{1},\dots,k_{r-1}}
\left\{ -\sum_{k_r \geq 0}f(k_{r}+1)z^{k_{r}}+\sum_{k_r \geq 1 }f(k_{r})z^{k_{r}}(X+Y) -\sum_{k_r \geq 2 } f(k_r-1)z^{k_r}XY\right\}y.
\end{align}
Using \eqref{Seq2},  
\begin{align*}
\eqref{eq3.8}
= & \sum_{r \geq 1} \sum_{k_1, \ldots k_r \geq 1}z_{k_{1},\dots,k_{r}}\left\{(X+Y)-3zXY \right\}y\\
& +\sum_{r \geq 2} \sum_{k_1, \ldots k_{r-1} \geq 1}z_{k_{1},\dots,k_{r-1}}
\left\{ -f(1)-f(2)z+f(1)z(X+Y) \right\}y\\
= &\sum_{r \geq 1} \sum_{k_1, \ldots k_r \geq 1}z_{k_{1},\dots,k_{r}}z\left\{-3XY -f(2)+(X+Y)^2 \right\}y\\
=&0.
\end{align*}
The last equality is due to \eqref{Seq1}.
Hence we obtain the identity.
\end{proof}
\begin{proof}[Proof of Theorem \ref{main3}]
Compare the coefficients of $X^mY^n$ ($m, n \geq 1$) of the identity 
obtained by applying $\frac{1}{(1-RX)(1-RY)}$ to both sides of the identity stated in Proposition \ref{theorem:a}, and we have 
\begin{align*}
&R^{m-1}(y) \diamond R^{n-1}(y) \\
& = \sum_{{\scriptsize \substack{0\leq i \leq m-1,\\ 0\leq j \leq n-1}}} R^{m-i+n-j-2}\left(y \diamond R\left(R^{i-1}(y)\diamond R^{j-1}(y)\right) \right) -\sum_{{\scriptsize \substack{0\leq i \leq m-1,\\ 0\leq j \leq n-1, \\(i, j) \neq (0, 0)}}} R^{m-i+n-j-1}\left(y \diamond R^{i-1}(y) \diamond R^{j-1}(y)\right).
\end{align*}
This equation is nothing but the formula $\sigma(f_{m, n})=0$.
\end{proof}

\section*{Acknowledgement}
This work is supported by JSPS KAKENHI Grant Numbers JP22K13897 and JP23K03059.


\end{document}